\newtheorem{thm}{Theorem}[section]
\newtheorem{prop}[thm]{Proposition}
\newtheorem{cor}[thm]{Corollary}
\newtheorem{lem}[thm]{Lemma}
\theoremstyle{definition}
\newtheorem{defn}[thm]{Definition}
\theoremstyle{remark}
\def\be{\begin{eqnarray}}
\def\ee{\end{eqnarray}}
\def\ben{\begin{eqnarray*}}
\def\een{\end{eqnarray*}}
\numberwithin{equation}{section}
\newcommand{\zpz}{{\mathbb{Z}}\big/{p\mathbb{Z}}}
\begin{document}

\title{Majorization and R\'enyi Entropy Inequalities via Sperner Theory\footnote{Some portions of this paper were presented by the authors at the IEEE International Symposia on Information Theory 
in 2014 and 2015 \cite{WWM14:isit, WM15:isit}.}}

\author{
Mokshay Madiman\thanks{Department of Mathematical Sciences, University of Delaware, Newark DE 19716. Email: madiman@udel.edu},
Liyao Wang\thanks{J. P. Morgan Chase, New York. Email: njuwangliyao@gmail.com} 
and Jae Oh Woo\thanks{Department of Mathematics, and Department of Electrical and Computer Engineering, The University of Texas at Austin. Email: jaeoh.woo@aya.yale.edu}}

%%    Information for first author
%\author{Mokshay Madiman}
%%    Address of record for the research reported here
%\address{Department of Mathematics, University of Delaware}
%%    Current address
%%\curraddr{Department of Mathematics and Statistics,
%%Case Western Reserve University, Cleveland, Ohio 43403}
%\email{madiman@udel.edu}
%%    \thanks will become a 1st page footnote.
%%\thanks{The first author was supported in part by NSF Grant \#000000.}
%
%%    Information for second author
%\author{Liyao Wang}
%\address{JP Morgan Chase}
%\email{njuwangliyao@gmail.com}
%%\thanks{Support information for the second author.}
%
%\author{Jae Oh Woo}
%\address{Department of Mathematics, and Department of Electrical and Computer Engineering, The University of Texas at Austin}
%\email{jaeoh.woo@utexas.edu}
%%\thanks{Support information for the second author.}

%    General info
%\subjclass[2000]{Primary 54C40, 14E20; Secondary 46E25, 20C20}

%\date{January 1, 2001 and, in revised form, June 22, 2001.}

%\dedicatory{d}

%\keywords{Posets; Strong Sperner property; Majorization; Entropy Power Inequality; Sumset Inequality.}

\date{13 November 2018} %\today

\maketitle
\begin{abstract}
A natural link between the notions of majorization and strongly Sperner posets is elucidated.
It is then used to obtain a variety of consequences, including new R\'enyi entropy inequalities for 
sums of independent, integer-valued random variables.
\end{abstract}
%A ranked poset is called strongly Sperner if the size of $k$-family cannot exceed the sum of $k$-largest Whitney numbers. 
%In a sense of function ordering, a function $f$ is (weakly) majorized by $g$ if the the sum of $k$-largest values in $f$ cannot 
%exceed the sum of $k$-largest values in $g$. Two definitions arise from different contexts, but each share a strong similarity with each other. 
%Furthermore, the product of two weighted posets assumes a structural similarity with a convolution of two functions. 
%Elements in the product of weighted posets with ranks capture underlying structures of the building blocks in the convolution. 
%Combining all together, we are able to derive 

%\tableofcontents

%\section*{This is an unnumbered first-level section head}
%This is an example of an unnumbered first-level heading.

%% The correct journal style for \specialsection is all uppercase; a known bug
%% in amsart.cls prevents this, so input must be uppercase until it is fixed.
%\specialsection*{This is a Special Section Head}
%\specialsection*{THIS IS A SPECIAL SECTION HEAD}
%This is an example of a special section head%
%%%%%%%%%%%%%%%%%%%%%%%%%%%%%%%%%%%%%%%%%%%%%%%%%%%%%%%%%%%%%%%%%%%%%%%%
%\footnote{Here is an example of a footnote. Notice that this footnote
%text is running on so that it can stand as an example of how a footnote
%with separate paragraphs should be written.
%\par
%And here is the beginning of the second paragraph.}%
%%%%%%%%%%%%%%%%%%%%%%%%%%%%%%%%%%%%%%%%%%%%%%%%%%%%%%%%%%%%%%%%%%%%%%%%
%.
\section{Introduction}\label{sec:intro}

It was observed by Erd\H{o}s \cite{Erd45} in 1945 that the lemma of Littlewood and Offord~\cite{LO43} on small ball probabilities of weighted sums of  
Bernoulli random variables actually follows from Sperner's theorem \cite{Spe28} on the maximal size of antichains in the Boolean lattice. Subsequently Stanley \cite{Sta80} and Proctor \cite{Pro82}
used similar ideas to attack more difficult problems; a very nice review of the key ideas can be found in \cite{Kri16}. The goal of this paper is to further develop the core idea of relating properties of posets to
the ``distributional spread'' of weighted sums of independent, integer-valued random variables. We do this in two steps.
First, we elucidate a natural link, which does not seem to have been explicitly observed in the literature, between the strong Sperner property of posets and its behavior
for product posets on the one hand, and majorization inequalities on the other. Second, we follow a classical approach, similar to that used in our earlier
papers \cite{WM14,WWM14:isit,MWW17:1}, to  demonstrate
new R\'enyi entropy inequalities for sums of independent random variables using the majorization inequalities.
The entropy inequalities are of interest in information theory and probability, and were our original motivation for this work-- they are discussed
at length in Section~\ref{sec:epi-appln}.

%This paper is focused on building majorization and R\'enyi entropy inequalities leveraging Sperner Theory. We explicitly prove that a product of strong Sperner posets can be translated to an entropy inequality in a discrete domain, namely, a discrete entropy power inequality. As a crucial intermediate step, we establish the link between the notions of majorization and strong Sperner posets.

%\par\vspace{.1in}
%\noindent{\bf Our Contributions.}
%Our main contributions in this paper are the following. We first extend the antichain observation of Erd\H{o}s for Sperner posets 
%to strong Sperner posets. Next we establish the close similarity between the convolution of two functions and the 
%product of two posets. Finally, we prove that strong Sperner posets translate to majorization in the sense of function ordering. 
%Then by following our previous approaches in , we prove several types of R\'enyi entropy 
%inequalities including discrete entropy power inequalities, and discuss various applications.

%\par\vspace{.1in}
%\noindent{\bf Notation, Terminology, and Majorization Lemma.}
%Throughout this paper, three major notations are used to describe our main results. We also note that the background material related to a poset is described in Section \ref{section:poset}.

%\textbf{(N1) Function and Random Variable Representations.} 

In order to state our main results, we need to develop some terminology. 
%We use the notion of a $\#$-log-concave function. 
%The definition of functional $\#$-log-concavity is originally motivated by weighted posets with log-concave weighted Whitney numbers. 
For a non-negative function $f:\mathbb{Z}\to\mathbb{R}_+$ over the integers, the support $\text{Supp}(f)$ is defined by $\{x\in\mathbb{Z}: f(x)>0\}$.
We identify sets with their indicator functions; thus, for example, $f=0.4\{0,3\}+ 0.2\{2\}$ means $f(0)=f(3)=0.4$, $f(2)=0.2$,
and $f(x)=0$ for $x\in\mathbb{Z}\setminus\{0,2,3\}$.

%For the consistency with the rank of the (ranked) partially ordered set, the starting index is chosen to be $0$ instead of $1$. 
\begin{defn}\label{defn:hash}
Suppose $f:\mathbb{Z}\to\mathbb{R}_+$ is finitely supported, with $|\text{Supp}(f)|=n+1$. 
Then we may write $\text{Supp}(f)=\{x_0,\cdots,x_n\}$ with $x_0<\cdots<x_n$,
and we may represent $f$ in the form
\begin{align*}
f=\sum_{r=0}^{n} a_r \{x_r\},
\end{align*}
where $a_i> 0$ for each $i\in \{0,\ldots, n\}$. Given the non-negative function $f$, we define $f^{\#}$ by
\begin{align*}
f^{\#}:=\sum_{r=0}^{n} a_r \{r\}.
\end{align*}
\end{defn}

Thus, $f^{\#}$ is supported on $\{0,\cdots,n\}$ and it takes the same functional values as $f$. 
If we consider the graph, we may think of $f^{\#}$ as a ``squeezed rearrangement'' of $f$,
where we preserve the order of the function values but eliminate gaps in the support. 

\begin{defn}
We say $f$ is \textit{$\#$-log-concave} if $f^{\#}$ is log-concave, i.e., $f^{\#}(i)^2 \geq f^{\#}(i-1)f^{\#}(i+1)$ for any $i\in\mathbb{Z}$. 
%Similarly, the notion $\#$-log-concavity can be extended to random variables in a similar way. 
We say that a random variable $X$ taking values in the integers is \textit{$\#$-log-concave} if its probability mass function is $\#$-log-concave. 
Given a random variable $X$ with probability mass function $f$, we write $X^{\#}$ for a random variable with probability mass function $f^{\#}$. 
\end{defn}

In the terminology of Definition~\ref{defn:hash}, since $a_r=0$ for $r\in\mathbb{Z}\setminus\{0,\cdots,n\}$, $f$ is \textit{$\#$-log-concave} if and only if $a_r^2\geq a_{r-1}a_{r+1}$.

%\textbf{(N2) Majorization.} 
We also need the classical notion of majorization. We use $f^{[i]}$ to denote the $i$-th largest value of $f$,
allowing for the possibility of multiple ties. For example, $f^{[i]}=f^{[i+1]}$ when $i$-th large value appears at two different arguments. 

\begin{defn}
Consider two finitely supported functions $f$ and $g$ from $\mathbb{Z}$ to $\mathbb{R}_+$,
and assume $|\text{Supp}(f)|=|\text{Supp}(g)|=n+1$.
We say $f$ is \textit{majorized} by $g$ (and write $f \prec g$) if 
\begin{align}\label{eqn:majorization_condition_1}
\sum_{i=1}^{k} f^{[i]} \leq \sum_{i=1}^{k}g^{[i]} \quad \text{for all } k=1,\cdots,n,
\end{align}
and
\begin{align}\label{eqn:majorization_condition_2}
\sum_{i=1}^{n+1} f^{[i]} = \sum_{i=1}^{n+1} g^{[i]}.
\end{align}
%If $f$ and $g$ satisfy both conditions \eqref{eqn:majorization_condition_1} and \eqref{eqn:majorization_condition_2}, 
For random variables $X$ and $Y$ with probability mass functions $f$ and $g$ respectively, we write $X\prec Y$ if $f\prec g$.
\end{defn}

%\section{Majorization for Sums of Discrete Random Variables}

Our first main theorem is a majorization inequality for convolutions that holds under a log-concavity condition.
Recall that, given independent random variables $X, Y$ with probability mass functions $f, g$, 
the sum $X+Y$ has the probability mass function $f\star g$, where $\star$ denotes convolution, i.e.,
$f \star g (k) = \sum_{i\in\mathbb{Z}} f(i)g(k-i)$ for each $k\in\mathbb{Z}$.

\begin{thm}\label{thm:main1}
Let $N$ be a finite number. If $X_1,\cdots,X_N$ are independent and $\#$-log-concave over $\mathbb{Z}$, then
\begin{align}
X_1 + \cdots + X_N \prec X_1^{\#} + \cdots + X_N^{\#}.
\end{align}
\end{thm}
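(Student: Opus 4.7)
The plan is to reinterpret both sides as weighted pushforwards on a product of chains and then invoke a weighted strong Sperner property -- this being the bridge between Sperner theory and majorization that the abstract advertises.

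Write $f_i = \sum_{r=0}^{n_i} a_r^{(i)} \{x_r^{(i)}\}$ for the pmf of $X_i$, with $x_0^{(i)} < \cdots < x_{n_i}^{(i)}$, and consider the product poset $P = \prod_{i=1}^{N}[0,n_i]$ with coordinatewise order, equipped with the product weight $w(r_1,\ldots,r_N) = \prod_{i=1}^{N} a_{r_i}^{(i)}$. Define two integer-valued grouping maps on $P$,
\[
L(r) := \sum_{i=1}^{N} x_{r_i}^{(i)}, \qquad R(r) := \sum_{i=1}^{N} r_i.
\]
Then the pmfs of $X_1+\cdots+X_N$ and of $X_1^{\#}+\cdots+X_N^{\#}$ are exactly $h(k) = \sum_{L(r)=k} w(r)$ and $h^{\#}(k) = \sum_{R(r)=k} w(r)$, i.e.\ the pushforwards of $w$ by $L$ and by $R$. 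Both have total mass $1$, so only the partial-sum inequality in the definition of majorization requires argument.

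The first key observation is a shape constraint: because each sequence $(x_r^{(i)})_r$ is \emph{strictly} increasing, if $r\le r'$ in $P$ with $r\ne r'$ then some coordinate strictly increases, forcing $L(r)<L(r')$. Consequently every level set $L^{-1}(k)$ is an antichain in $P$. The second, more substantive, ingredient is a weighted strong Sperner property for $(P,w)$: for any $j$ pairwise disjoint antichains $A_1,\ldots,A_j\subset P$,
\[
\sum_{\ell=1}^{j} w(A_\ell) \;\le\; \sum_{i=1}^{j} W^{[i]},
\]
where $W^{[i]}$ denotes the $i$-th largest of the rank-level weights $W_r := \sum_{R(p)=r} w(p) = h^{\#}(r)$. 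The $\#$-log-concavity hypothesis -- equivalently, log-concavity of each coordinate weight sequence $(a_r^{(i)})_r$ -- is precisely what allows one to lift the classical de Bruijn--Tengbergen--Kruyswijk strong Sperner theorem for products of chains to this weighted setting. I expect the proof to proceed via a weighted symmetric chain decomposition of $P$: log-concavity should make the weights along each saturated chain of the SCD unimodal, and since antichains intersect each chain in at most one element, summing over the SCD yields the bound by the top rank-level weights.

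Granting the weighted Sperner inequality, the theorem drops out immediately. For each $j$, let $k_1,\ldots,k_j$ be the locations of the $j$ largest values of $h$; then $L^{-1}(k_1),\ldots,L^{-1}(k_j)$ are pairwise disjoint antichains in $P$, whence
\[
\sum_{i=1}^{j} h^{[i]} \;=\; \sum_{\ell=1}^{j} w\bigl(L^{-1}(k_\ell)\bigr) \;\le\; \sum_{i=1}^{j} W^{[i]} \;=\; \sum_{i=1}^{j} (h^{\#})^{[i]},
\]
which, together with $\sum_k h(k) = \sum_k h^{\#}(k)$, is exactly the claimed majorization. The main obstacle is therefore the weighted strong Sperner inequality itself: the unweighted version is classical, but carrying along a product of log-concave weights while preserving the correct unimodal structure along chains is where the $\#$-log-concavity hypothesis does its essential work, and this is the step I expect to be the real technical heart of the argument.
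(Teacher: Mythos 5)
Your reduction is exactly the paper's: you build the product of weighted chains $P=\prod_i[0,n_i]$ with weight $w(r)=\prod_i a_{r_i}^{(i)}$, observe that the pmfs of the two sums are the pushforwards of $w$ under $L$ and $R$, note that strict monotonicity of each support sequence makes every level set $L^{-1}(k)$ an antichain (this is the paper's Lemma \ref{lem:antichain}), and then feed $j$ disjoint such antichains into a weighted strong Sperner inequality whose right-hand side is the sum of the $j$ largest values of $f_1^{\#}\star\cdots\star f_N^{\#}$. All of that is correct and matches the paper's architecture step for step.

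The gap is that the one substantive ingredient --- the weighted strong Sperner property of $(P,w)$ --- is asserted rather than proved, and the route you sketch for it is doubtful. A weighted symmetric chain decomposition argument needs more than unimodality of the weights along each chain: a $k$-family may pick the $k$ heaviest elements of each chain of the decomposition, and with non-symmetric, coordinate-dependent weights there is no reason those elements sit at the ranks carrying the $k$ largest weighted Whitney numbers, nor that the chains' weight profiles are synchronized across the decomposition. This alignment problem is precisely why the weighted theory is developed through the \emph{normalized matching property} rather than SCDs. The paper closes the gap as follows: every weighted chain is trivially normal (the ratio in \eqref{eq:weightsumratio} is $1$); $\#$-log-concavity means each chain has log-concave weighted Whitney numbers; the Hsieh--Kleitman theorem (Proposition \ref{lemma_hsieh}) says normality together with log-concave weighted Whitney numbers is preserved under products; and a normal poset is strongly Sperner (Proposition \ref{proposition_normal_strongly_spermer}). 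Note in particular that log-concavity enters only through the hypothesis of the Hsieh--Kleitman product theorem, not through any chain-decomposition unimodality --- so while you correctly located where $\#$-log-concavity must do its work, the mechanism you guessed for it is not the one that makes the proof go through.
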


The proof of Theorem~\ref{thm:main1} is based on the strong Sperner property of the product of weighted chain posets-- 
Section~\ref{section:poset} summarizes the necessary background on poset theory, and the proof of the theorem is detailed
in Section~\ref{sec:thmproofmain1}. We mention in passing that although we focus on $\mathbb{Z}$-valued random variables
with finite support in this paper, Theorem~\ref{thm:main1} has an extension to the case where the random variables 
have infinite support using a similar procedure to that in \cite{WWM14:isit, Woo15:phd, MWW17:1}.

%\section{Miscellaneous Application to Set Majorization over the Integers}\label{sec:gen_rearrange}

Let us discuss a pleasing application of Theorem~\ref{thm:main1} to proving a key ingredient
in rearrangement inequalities on the integers proved by Gabriel \cite{Gab32} (generalizing
a result of Hardy and Littlewood \cite{HL28}) and popularized in the book by Hardy, Littlewood and P\'olya \cite{HLP88:book}.
For a finite set $A$ in $\mathbb{Z}$, note that $A^{\#}=\{0,1,\cdots,|A|-1\}$; 
here, as before, we identify the sets $A$ and $A^{\#}$ with their indicator functions.

\begin{cor}\label{cor:setmajorization}
If $A_1,A_2,\cdots,A_N$ are finite sets (or indicator functions) in $\mathbb{Z}$, then
\begin{align*}
{A_1} \star A_2 \star \cdots  \star A_N \prec A_1^{\#} \star A_2^{\#} \star \cdots  \star A_N^{\#}.
\end{align*}
\end{cor}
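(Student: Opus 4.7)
The plan is to deduce the corollary directly from Theorem~\ref{thm:main1} by observing that indicator functions of sets are the simplest possible $\#$-log-concave functions, then passing from random variables back to unnormalized functions via scale invariance of majorization.

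The first (and only conceptually novel) step is the observation that for any finite $A\subset\mathbb{Z}$, the indicator function $\mathbf{1}_A$ is $\#$-log-concave. Indeed, by the definition of the $\#$-rearrangement, $\mathbf{1}_A^{\#}=\mathbf{1}_{\{0,1,\ldots,|A|-1\}}$, which is constant on its support, and so the log-concavity condition $a_i^2\ge a_{i-1}a_{i+1}$ holds trivially for every $i\in\mathbb{Z}$ (the only nontrivial cases are at the two endpoints of the support, where the right-hand side contains a zero factor). Consequently, if we normalize and let $X_i$ be uniformly distributed on $A_i$, then the probability mass function $f_i=\mathbf{1}_{A_i}/|A_i|$ is $\#$-log-concave, since $(cf)^{\#}=cf^{\#}$ and log-concavity is preserved under multiplication by a positive scalar.

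Having put ourselves in a position to apply Theorem~\ref{thm:main1}, I would take $X_1,\ldots,X_N$ to be independent uniform random variables on $A_1,\ldots,A_N$ respectively, so that
\begin{align*}
X_1+\cdots+X_N \;\prec\; X_1^{\#}+\cdots+X_N^{\#}.
\end{align*}
The probability mass functions of the two sides are precisely
\begin{align*}
\frac{\mathbf{1}_{A_1}\star\cdots\star\mathbf{1}_{A_N}}{\prod_{i=1}^{N}|A_i|}\qquad\text{and}\qquad\frac{\mathbf{1}_{A_1^{\#}}\star\cdots\star\mathbf{1}_{A_N^{\#}}}{\prod_{i=1}^{N}|A_i|},
\end{align*}
using the fact that the PMF of an independent sum is the convolution of PMFs, and that taking $\#$ commutes with normalization. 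Since the majorization relation $f\prec g$ is invariant under multiplication of both $f$ and $g$ by the same positive constant (each partial sum of ordered values simply scales), clearing the common denominator $\prod_i|A_i|$ yields the desired inequality.

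The only mild technical point, which I do not expect to be a true obstacle, concerns the fact that $|\text{Supp}(\mathbf{1}_{A_1}\star\cdots\star\mathbf{1}_{A_N})|$ can strictly exceed $|\text{Supp}(\mathbf{1}_{A_1^{\#}}\star\cdots\star\mathbf{1}_{A_N^{\#}})|$, the latter being exactly $1+\sum_{i}(|A_i|-1)$, with equality only when the $A_i$ are arithmetic progressions with a common difference. This is resolved by the standard convention of padding the shorter ordered sequence of nonzero values with zeros before comparing partial sums; the total-mass equation \eqref{eqn:majorization_condition_2} then still holds and the top-$k$ sum inequality \eqref{eqn:majorization_condition_1} is exactly the content of Theorem~\ref{thm:main1} under the padding.
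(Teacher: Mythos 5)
Your proposal is correct and follows essentially the same route as the paper: take $X_i$ uniform on $A_i$, note that indicator functions are $\#$-log-concave, apply Theorem~\ref{thm:main1}, and clear the common normalizing constant $\prod_i |A_i|$ using the scale-invariance of majorization. The extra details you supply (the endpoint check for $\#$-log-concavity and the zero-padding convention when the two supports have different sizes) are consistent with how the paper itself handles these points elsewhere.
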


To see how Corollary~\ref{cor:setmajorization} follows from Theorem \ref{thm:main1}, suppose 
random variables $X_1,X_2,\cdots,X_N$ are uniformly distributed on the sets $A_1,A_2,\cdots,A_N$ respectively. 
By applying Theorem \ref{thm:main1} and re-normalizing the probability into the total number of points, the desired result follows.

We note that a version of Gabriel's inequality was, in fact, extended to  the prime cyclic groups $\zpz$
by Lev \cite{Lev01} . In our companion paper \cite{MWW17:1}, we develop a further generalization of such
 rearrangement inequalities (see \cite[Theorem 6.2]{MWW17:1}) in the prime cyclic groups, with the crucial
 in our proofs being the leveraging  of Lev's set majorization lemma (see \cite[Theorem 1]{Lev01} for the full statement). 
% While Lev's result  is more general in that it applies to all prime cyclic groups,
%we give below a quick proof of the set majorization lemma over the integers as an application of Theorem \ref{thm:main1}. 
The results of Gabriel \cite{Gab32}, Lev \cite{Lev01} and the authors \cite{MWW17:1} for general non-negative functions
rather than indicator functions of sets require additional assumptions because one has to take into account the ``shape''
of the convolved functions. It is a nice feature of the statement and proof above that it does not require such assumptions.

%SECOND THEOREM

Our second theorem is related to a beautiful and well known result of S\'ark\H{o}zy and Szemer\'edi \cite{SS65}
related to what they called the Erd\H{o}s-Moser problem (although the paper of Katona \cite{Kat66}, which they cite
a pre-publication version of, does not discuss it in the published paper, and the problem posed by Erd\H{o}s in 1947 
in the {\it American Mathematical Monthly} with solutions given by Moser \cite{EM47} as well as several others, 
which is cited by several later papers on the S\'ark\H{o}zy-Szemer\'edi result, seems only tangentially related).
%\subsection{Extension to Erd\H{o}s-Moser Problem}
In any case, the ``Erd\H{o}s-Moser problem'' is the following: Given $N$ i.i.d. Bernoulli random variables $Y_1,\cdots,Y_N$, 
estimate the maximal probability of independent weighted sums over distinct weights:
\begin{align*}
\sup_{k\in\mathbb{Z}} \quad\sup_{0<a_1\neq \cdots \neq a_N}\mathbf{P}\left( a_1Y_1 + a_2Y_2+\cdots + a_NY_N = k\right).
\end{align*}
S\'ark\H{o}zy and Szemer\'edi \cite{SS65} asserted that Erd\H{o}s and Moser had shown that the maximal probability is of order $\big(\frac{\log N}{N}\big)^{3/2}$
and had conjectured that the logarithmic term could be removed; they proved this conjecture, thus showing that the  maximal probability is of order $N^{-3/2}$.
However, identification of an extremal set of weights remained open until Stanley \cite{Sta80} used tools from algebraic geometry
to show that $(a_1, a_2, \ldots, a_N)=(1, 2, \ldots, N)$ is extremal. A more elementary algebraic proof was soon after given
by Proctor \cite{Pro82}. Much more recently, Nguyen \cite{Ngu12} not only observed that the maximal probability is in fact
$[\sqrt{24/\pi} +o(1)]N^{-3/2}$, but he also showed a stability result around the extremal configuration.

With this background, we are ready to state our second main result.

\begin{thm}\label{thm:main2}
	Assume that $0<a_1< a_2 < \cdots < a_N$. If $Y_i$'s are independent random variables following 
	$\text{Binomial}\left( m_i,\frac{1}{2} \right)$ for $1\leq m_N \leq m_{N-1}\leq \cdots \leq m_1$, then
	\begin{align}
	a_1Y_1 + a_2Y_2+\cdots + a_NY_N \prec Y_1 + 2Y_2+ \cdots + NY_N.
	\end{align}
\end{thm}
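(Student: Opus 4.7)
The plan parallels the strategy behind Theorem~\ref{thm:main1}: reduce both sides to weighted Bernoulli sums and invoke a strong Sperner property of the appropriate poset. First I decompose each binomial $Y_i$ as a sum $Z_{i,1}+\cdots+Z_{i,m_i}$ of i.i.d.\ Bernoulli$(1/2)$ variables. The hypothesis $m_1 \geq m_2 \geq \cdots \geq m_N \geq 1$ is used critically to regroup the $M:=\sum m_i$ Bernoullis along \emph{columns} indexed by $j\in\{1,\ldots,m_1\}$: column $j$ consists of $Z_{i,j}$ for $i\in\{1,\ldots,N_j\}$ with $N_j:=\max\{i:m_i\geq j\}$, and monotonicity makes $N_j$ a well-defined, non-increasing function of $j$. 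Setting
\begin{align*}
V_j \;:=\; \sum_{i=1}^{N_j} a_i Z_{i,j}, \qquad V_j^{\star} \;:=\; \sum_{i=1}^{N_j} i\,Z_{i,j},
\end{align*}
the collections $\{V_j\}$ and $\{V_j^{\star}\}$ are each independent, with $\sum_j V_j \stackrel{d}{=} \sum_i a_i Y_i$ and $\sum_j V_j^{\star} \stackrel{d}{=} \sum_i i Y_i$. The task thus reduces to establishing $\sum_j V_j \prec \sum_j V_j^{\star}$.

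The combinatorial heart of the argument is the single-column inequality $V_j \prec V_j^{\star}$, which, stripping the index $j$, asserts that for i.i.d.\ Bernoulli$(1/2)$ variables $\epsilon_1,\ldots,\epsilon_n$ and distinct positive weights $0<a_1<\cdots<a_n$,
\begin{align*}
a_1\epsilon_1+\cdots+a_n\epsilon_n \;\prec\; \epsilon_1+2\epsilon_2+\cdots+n\,\epsilon_n.
\end{align*}
This is the majorization refinement of the S\'ark\H{o}zy--Szemer\'edi and Stanley--Proctor resolution of the Erd\H{o}s--Moser problem recalled in the Introduction. I would derive it from the strong Sperner property of the Boolean lattice $2^{[n]}$ equipped with the weighted rank $\rho_{\star}(S):=\sum_{i\in S}i$: for any competing positively-weighted rank $\rho_a$, the sorted sizes of the level-sets of $\rho_a$ (which are antichains, since $\rho_a$ is strictly monotone along chains) are dominated termwise in the majorization order by the sorted sizes of the $\rho_{\star}$ level-sets. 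This is the ``Sperner theory implies majorization'' principle advertised in the abstract and developed in Section~\ref{section:poset}, and its concrete realization relies on a symmetric chain decomposition of $2^{[n]}$ that respects $\rho_{\star}$.

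The main obstacle is assembling these column-wise majorizations into the joint statement $\sum_j V_j \prec \sum_j V_j^{\star}$, because majorization is not preserved under convolution in general; indeed one can check directly that $V_j^{\star}$ fails to be log-concave once $N_j\geq 3$ (e.g.\ the PMF of $\epsilon_1+2\epsilon_2+3\epsilon_3$ is $(1,1,1,2,1,1,1)/8$), which rules out a classical convolution-preservation approach via log-concavity. The intended way around this, in keeping with the poset philosophy of Theorem~\ref{thm:main1}, is to carry out the majorization comparison directly on the product Boolean lattice $\mathcal{P}:=\prod_j 2^{[N_j]}$, viewing the PMFs of $\sum_j V_j$ and $\sum_j V_j^{\star}$ as fibre sizes of the two product-weighted ranks $\rho_a^{\otimes}:=\sum_j\rho_a$ and $\rho_{\star}^{\otimes}:=\sum_j\rho_{\star}$ on $\mathcal{P}$. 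Proving the product-version strong Sperner property reduces, via the monotonicity of $j\mapsto N_j$, to gluing the single-column symmetric chain decompositions into one symmetric chain decomposition of $\mathcal{P}$ compatible with $\rho_{\star}^{\otimes}$; verifying the compatibility of this gluing is the technical crux.
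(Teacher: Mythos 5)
Your reduction is the paper's: decompose each $Y_i$ into Bernoulli$\left(\frac{1}{2}\right)$ summands, regroup them into columns using $m_1\geq\cdots\geq m_N$, and compare the level sets of the two weighted sums inside a product poset via a strong Sperner property. The gap is in the combinatorial input you defer to. The relevant single-column poset is not the Boolean lattice $2^{[n]}$ ``equipped with the weighted rank'' $\rho_{\star}(S)=\sum_{i\in S}i$: under the subset order, $2^{[n]}$ is not graded by $\rho_{\star}$ (a covering step $S\subset S\cup\{i\}$ changes $\rho_{\star}$ by $i$, not by $1$), and the strong Sperner property of $2^{[n]}$ only bounds unions of $k$ antichains by sums of the $k$ largest binomial coefficients --- it says nothing about the level sets of $\rho_{\star}$. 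What is actually needed is the poset $M(n)$ of Section~\ref{section:poset}: subsets of $[n]$ ordered by componentwise comparison of their sorted element lists (a strictly finer order than inclusion --- e.g.\ $\{1\}\preccurlyeq\{3\}$ there), for which $\rho_{\star}$ \emph{is} the rank function. The assertion that $M(n)$ is rank-symmetric, rank-unimodal and strongly Sperner is Stanley's theorem (Lemma~\ref{lem:mnpeck}), proved via the hard Lefschetz theorem or Proctor's $\mathfrak{sl}_2$ argument; it does not follow from classical Sperner theory on $2^{[n]}$, and the route you propose --- a symmetric chain decomposition of $2^{[n]}$ respecting $\rho_{\star}$ --- is not available: no such decomposition of $M(n)$ is known (this is of the same notoriously open type as the symmetric chain decomposition problem for $L(m,n)$), and you explicitly leave the product-level gluing unresolved as the ``technical crux.''

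The paper closes both holes by citation rather than construction: Lemma~\ref{lem:mnpeck} gives the Peck property of each factor $M(\cdot)$, and the Proctor--Saks--Sturtevant theorem (Lemma~\ref{lem:productpeck}) shows that a product of Peck posets is again Peck, hence strongly Sperner. Your strict-monotonicity argument that level sets of the generic weighted sum are antichains is correct and is Lemma~\ref{lem:mnantichain} (once stated for the finer order on $M(\cdot)$, which gives more comparabilities and hence a stronger antichain claim than in $2^{[n]}$); the level sets of $\sum_i iY_i$ are exactly the rank levels, so the $k$-Sperner inequalities give \eqref{eqn:majorization_condition_1}, and \eqref{eqn:majorization_condition_2} is automatic for probability mass functions. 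Your observation that log-concavity-based convolution arguments fail here is apt, and working directly in the product poset is the right fix; but the ``combinatorial heart'' you isolate is precisely the hard part of the theorem, and the tool you name for it is the wrong one.
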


The proof of Theorem~\ref{thm:main2} is based on the strong Sperner property of some products of posets, and is detailed in Section~\ref{sec:thmproofmain2}. 

As a direct application of Theorem \ref{thm:main2}, %or Proposition \ref{thm:erdosmoser1}, 
we can go beyond the Bernoulli assumption in the prior studies of \cite{SS65, Sta80} discussed above,
and identify the extremal weights for the wider class of binomially distributed random variables. 

\begin{cor}\label{cor:erdosmoser}
	Let $0<a_1\neq \cdots \neq a_N$. Let $Y_i$ be i.i.d. random variables with the $\text{Binomial}\left( m,\frac{1}{2} \right)$ distribution. 
	Then
	%For the case when $\alpha=+\infty$ in Proposition \ref{thm:erdosmoser1}, we have
\begin{align*}
\mathbf{P}\left( a_1Y_1 + a_2Y_2+\cdots + a_NY_N = k\right) \leq \mathbf{P}\left( Y_1 + 2Y_2+ \cdots + NY_N = \left\lfloor\frac{mN(N+1)}{4} \right\rfloor \right).
\end{align*}
\end{cor}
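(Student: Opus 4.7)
My plan is to derive Corollary~\ref{cor:erdosmoser} from Theorem~\ref{thm:main2} in two steps. First, since $Y_1,\ldots,Y_N$ are i.i.d., the distribution of $\sum_i a_iY_i$ depends only on the multiset $\{a_1,\ldots,a_N\}$, so by relabeling I may assume $0<a_1<a_2<\cdots<a_N$. Taking $m_i=m$ for every $i$ makes the monotonicity hypothesis $1\le m_N\le\cdots\le m_1$ of Theorem~\ref{thm:main2} trivial, and that theorem then yields $\sum_{i=1}^N a_iY_i \prec T$, where $T:=Y_1+2Y_2+\cdots+NY_N$. The $k=1$ case of \eqref{eqn:majorization_condition_1} reads $f^{[1]}\le g^{[1]}$, so $\sup_k \mathbf{P}(\sum_i a_iY_i=k)\le \sup_k \mathbf{P}(T=k)$; it remains to show the right-hand supremum is attained at $\lfloor mN(N+1)/4\rfloor$.

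For that, I would establish two properties of the PMF of $T$. Symmetry about $\mu:=mN(N+1)/4$ is immediate from $Y_i\stackrel{d}{=}m-Y_i$, which gives $T\stackrel{d}{=}mN(N+1)/2-T$. For unimodality, I would use that the probability generating function factors as
\[
\sum_k \mathbf{P}(T=k)\,q^k \;=\; 2^{-mN}\prod_{i=1}^N(1+q^i)^m,
\]
so the PMF of $T$ coincides, up to normalization, with the coefficient sequence of $\prod_{i=1}^N(1+q^i)^m$. This polynomial is the (weighted) rank-generating function of the same product of weighted chain posets whose strong Sperner property Section~\ref{sec:thmproofmain2} exploits, and the strong Sperner property implies rank-unimodality. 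Since $T$ is supported on $\{0,1,\ldots,mN(N+1)/2\}$ (the presence of the weight $1$ guarantees no gaps), any symmetric unimodal sequence on this support attains its maximum at $\lfloor mN(N+1)/4\rfloor$, which finishes the argument.

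The main obstacle is the unimodality step. It is not formally a consequence of the statement of Theorem~\ref{thm:main2} (which compares two PMFs rather than asserting a structural property of a single PMF), but rather of the same strong Sperner input used to prove that theorem, so in a clean writeup I would isolate this rank-unimodality as a separate lemma. Alternatively one can cite the classical fact, going back to Stanley and Proctor, that the coefficients of $\prod_i(1+q^{a_i})^m$ form a symmetric unimodal sequence.
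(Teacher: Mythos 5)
Your proposal follows essentially the same route as the paper: use the i.i.d.\ assumption to reorder the weights, apply Theorem~\ref{thm:main2} with $m_i=m$, extract the $k=1$ case of the majorization to compare the maxima of the two probability mass functions, and then locate the maximum of the law of $Y_1+2Y_2+\cdots+NY_N$ at the midpoint via symmetry and unimodality. One correction to your justification of the unimodality step: the strong Sperner property alone does \emph{not} imply rank-unimodality (and the posets exploited in Section~\ref{sec:thmproofmain2} are the $M(m_j)$'s, not weighted chains); what is needed is the \emph{Peck} property of the product poset from Lemma~\ref{lem:productmnpeck}, which packages rank-symmetry and rank-unimodality together with strong Spernerity, and this is precisely what the paper invokes. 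Your fallback --- the classical Stanley--Proctor fact that the coefficients of $\prod_{i=1}^N(1+q^i)^m$ form a symmetric unimodal sequence --- is an equally valid way to close that step.
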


%\begin{proof}
To see how Corollary~\ref{cor:erdosmoser} follows from Theorem \ref{thm:main2}, observe that since $Y_i$ are i.i.d.
in the former, we may assume that the $a_i$ are ordered. Then the conclusion follows from Theorem \ref{thm:main2}, 
and the optimal case is achieved at the midpoint of the range as the distribution of $ Y_1 + 2Y_2+ \cdots + NY_N$ is symmetric 
and unimodal (this latter fact is confirmed by Lemma \ref{lem:productmnpeck}, which we discuss later).
%\end{proof}
Of course, Theorem \ref{thm:main2} can be applied without an 
identically distributed assumption, but we make this assumption in Corollary~\ref{cor:erdosmoser} for simplicity of statement.

This paper is organized as follows. Our original motivation for pursuing this work came from a search for R\'enyi entropy power inequalities
for integer-valued random variables, which is a problem of significant interest in information theory. We explain this motivation
and describe how our main results may be applied to obtain new entropy power inequalities in Section~\ref{sec:epi-appln}.
The rest of the paper focuses on the proofs of our main results-- Section~\ref{section:poset} recalls the necessary background on Sperner theory, 
and the proofs of the two main theorems are detailed in Sections~\ref{sec:thmproofmain1} and ~\ref{sec:thmproofmain2}.

\section{Applications to R\'enyi Entropy Inequalities}\label{sec:epi-appln}

\subsection{Background on entropy power inequality}\label{sec:entropyin-bg}

%\textbf{(N3) R\'enyi Entropy.} 

We first define the one-parameter family of R\'enyi entropies for a probability mass function on the integers
(these can be defined on more general spaces by using a reference measure other than counting measure,
but we do not need the more general notion here).

\begin{defn}
Let $X$ be an integer-valued random variable with probability mass function $f$.
The R\'enyi entropy of order $\alpha\in(0,1) \cup (1,+\infty)$ is defined by
\begin{align*}
H_\alpha  (X)= \frac{1}{1-\alpha } \log\left( \sum_{i\in \mathbb{Z}} f(i)^{\alpha } \right).
\end{align*}
For limiting cases of $\alpha$, define
\begin{align*}
H_0(X) &= \log|\text{supp}(f)|,\\
H_1 (X) &= \sum_{i\in \mathbb{Z}} -f(i)\log f(i),\\
H_\infty (X) &= -\log \sup_{i\in \mathbb{Z}} f(i).
\end{align*}
\end{defn}

The three special cases are defined in a manner consistent with taking limits of $H_\alpha(X)$ for $\alpha\in(0,1) \cup (1,+\infty)$. 
Thus the R\'enyi entropy of order $\alpha \in [0,\infty]$ is well-defined. In particular, $H_1(\cdot)$ is simply the Shannon entropy $H(\cdot)$. 

Entropy inequalities (even just for Shannon entropy) are powerful tools that have found use in virtually all 
parts of mathematics. For example, just within discrete mathematics, they have been used to obtain bounds for enumeration problems
 (see, e.g., \cite{Rad01, MT10, JKM13}), to prove sumset inequalities in additive combinatorics (see, e.g., \cite{Ruz09:1, Tao10, MMT12, ALM17, MK18}),
 and to study probabilistic models of discrete phenomena (e.g., independent sets \cite{Kah01b}, card shuffles \cite{Mor13}), colorings \cite{PS18}).
% and to quantify limit theorems in discrete probability (see, e.g., \cite{HJK10}).
%
Among various entropy inequalities, the so-called ``entropy power inequality'' in Euclidean spaces has been very successfully 
applied to prove coding theorems or to determine channel capacities for communication problems involving 
Gaussian noise in Information Theory (see, e.g., \cite{Ber74, WSS06}). 
The entropy power inequality also plays an important role in Probability Theory (see, e.g., \cite{Joh04:book}) 
and Convex Geometry (see, e.g., \cite{DCT91, Gar02, MMX17:0}). 

The entropy power inequality can be formulated in two different ways. 
Firstly, the original formulation, which was suggested by Shannon \cite{Sha48} and proved by Stam \cite{Sta59}, 
is the following. For independent random variables $X$ and $Y$ in $\mathbb{R}^d$,
\begin{align*}
e^{\frac{2}{d}h(X+Y)} \geq e^{\frac{2}{d}h(X)} + e^{\frac{2}{d}h(Y)},
\end{align*}
where $h(X)$ represents the  \textit{differential entropy} of $X$. Formally, if $X$ has a density function $f$ in $\mathbb{R}^d$, 
then $h(X)=-\int_{\mathbb{R}^d} f(x)\log f(x)dx$. The inequality shows the superadditivity of the ``entropy power'' with respect 
to the sum of two independent random variables. 
Secondly, an equivalent sharp formulation (see, e.g., \cite{WM14} for discussion) states that
\begin{align*}
h(X+Y) \geq h(Z_X^* + Z_Y^*),
\end{align*}
where $Z_X^*$ and $Z_Y^*$ are two independent Gaussian distributions with $h(X)=h(Z_X^*)$ and $h(Y)=h(Z_Y^*)$. 
%We note that $Z_X^*$ and $Z_Y^*$ can be obtained by shuffling domains of those density functions of $X$ and $Y$.

The entropy power inequality stated above focuses on the continuous setting of $\mathbb{R}^d$. 
It has been extensively studied and many refinements exist (see, e.g., \cite{ABBN04:1, MB07, MG18}). 
On the other hand, we only have a limited understanding of analogues of the entropy power inequality on 
discrete domains such as the integers or cyclic groups. 

One of the main difficulties is that useful analytic tools in the continuous domain cannot be naturally translated into the discrete domain. 
For example, one can derive the entropy power inequality in $\mathbb{R}^d$ from the sharp form of Young's inequality for convolution
developed by Beckner \cite{Bec75}, as observed by Lieb \cite{Lie78}, %\cite{Tos15:1, Li18}
or using inequalities for Fisher information, which is defined using derivatives of the probability density function, as done by Stam \cite{Sta59}. %\cite{MB07}. 
%Specifically, \textit{the sharp Young's convolution inequality} is described as follows. For $p,q,r>1$ such that $\frac{1}{p}+\frac{1}{q}=1+\frac{1}{r}$, 
%\begin{align*}
%\|f\star g\|_r \leq C_{p,q}\|f\|_p \|g\|_q,
%\end{align*}
%where $C_{p,q}<1$ and the optimal case is achieved when $f$ and $g$ are multi-dimensional Gaussian functions. Then by taking a derivative of the functional norm, one can deduce it to an entropy inequality.
%
%In addition, \textit{Fisher Information} of a continuous random variable $X$ with density $f$ is defined by $I(X)=\int_{\mathbb{R}^d} \left(\nabla \log(f(x))\right)^2 f(x) dx$. The process to derive the entropy power inequality from Fisher Information is more involved. Please refer to the paper \cite{MB07} for details. %These two well-known approaches are used to derive the entropy power inequality in a continuous domain.
Unfortunately, a non-trivial sharp Young's inequality cannot be achieved in the discrete setting. It is also not
obvious what the right definition of Fisher information should be for the discrete setting because discrete derivatives
do not satisfy the chain rule (see, e.g., \cite{KHJ05, Mad05:phd, BJKM10} for possible definitions of discrete Fisher informations). 
Owing to the difficulty of fitting such approaches into a discrete setting, a general and sharp analogue of the 
entropy power inequality on the integers has not yet been established.

Nevertheless, it is a natural and interesting question to find a fully satisfactory entropy power inequality on the integers-- earlier attempts
in this direction include \cite{JY10, HAT14, WWM14:isit, WM15:isit}. 
%As described in the second formulation of the entropy power inequality, finding entropy inequalities of sums is our main application focus. 
In our companion paper \cite{MWW17:1}, we established 
a lower bound on the entropy of sums in prime cyclic groups (including the integers) based on rearrangement inequalities and 
functional ordering by majorization where the rearrangement of a function $f$ is achieved by shuffling (permuting) the domain of $f$. 
While details may be found in \cite{MWW17:1}, the goal of these rearrangement inequalities is to identify optimal permutations that maximize
or minimize a sum of pairwise products. 

In this paper, we focus on the integer domain or the integer lattice domain. We continue to leverage the idea of majorization used 
in our companion paper \cite{MWW17:1}. However, instead of establishing rearrangement inequalities, we take a different path 
to establish the lower bound inequality of the entropy of sums in integers. Our approach is to establish and utilize the similarity
 between the strong Sperner property of posets (the origin of this
notion lies, of course, in Sperner's theorem \cite{Spe28}, but the way we use this notion is inspired by Erd\H{o}s~\cite{Erd45}
as described in Section~\ref{sec:intro}) 
and functional ordering by majorization. 
%The Sperner property of a poset and the antichain argument are originated from Erd\H{o}s~\cite{Erd45}.

\subsection{Two entropy inequalities}\label{sec:entropyin}

A key application of Theorem \ref{thm:main1} (and also Theorem \ref{thm:main2}) lies in establishing a lower bound on
the R\'enyi entropy of convolutions. As the main tool in translating majorization results to entropy inequalities, 
we use the following basic lemma.

%\textbf{Majorization Lemma.} 
\begin{lem}\cite[Proposition 3-C.1]{MOA11:book}\label{lem:maj_to_convex}
Assume that $f$ and $g$ are finitely supported non-negative functions in $\mathbb{Z}$ and $f\prec g$. For any convex function $\Phi:\mathbb{R}\to\mathbb{R}$,
\begin{align*}
\sum_{i\in\mathbb{Z}}\Phi\left( f(i) \right) \leq \sum_{i\in\mathbb{Z}}\Phi\left( g(i) \right).
\end{align*}
\end{lem}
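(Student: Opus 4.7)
The plan is to prove this classical Hardy--Littlewood--P\'olya-type inequality via the doubly stochastic matrix characterization of majorization. Since both $f$ and $g$ are finitely supported with supports of equal size $n+1$, I first encode their positive values as the vectors $\mathbf{f} = (f^{[1]}, \ldots, f^{[n+1]})$ and $\mathbf{g} = (g^{[1]}, \ldots, g^{[n+1]})$ in $\mathbb{R}_+^{n+1}$. Interpreting $\sum_{i \in \mathbb{Z}} \Phi(f(i))$ as a sum over the finite support (equivalently, assuming $\Phi(0) = 0$, which covers every intended application to R\'enyi entropies with $\Phi(x) = x^{\alpha}$), the inequality to be shown reduces to $\sum_{k=1}^{n+1} \Phi(f^{[k]}) \leq \sum_{k=1}^{n+1} \Phi(g^{[k]})$, and the positions of the underlying support points become irrelevant.

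Next, I would invoke the classical Hardy--Littlewood--P\'olya representation: $\mathbf{f} \prec \mathbf{g}$ is equivalent to the existence of a doubly stochastic matrix $D = (d_{kj})$ such that $f^{[k]} = \sum_{j} d_{kj}\, g^{[j]}$ for every $k$. This equivalence can be obtained either from Birkhoff's theorem on the extreme points of the polytope of doubly stochastic matrices, or constructively through a finite sequence of Robin Hood / T-transforms.

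Given this representation, Jensen's inequality applied row-wise gives, for each $k$,
\begin{align*}
\Phi(f^{[k]}) \;=\; \Phi\Bigl(\sum_{j} d_{kj}\, g^{[j]}\Bigr) \;\leq\; \sum_{j} d_{kj}\, \Phi(g^{[j]}),
\end{align*}
using convexity of $\Phi$ together with the fact that the $k$-th row of $D$ is a probability vector. Summing over $k$ and swapping the order of summation then yields
\begin{align*}
\sum_{k} \Phi(f^{[k]}) \;\leq\; \sum_{j} \Phi(g^{[j]}) \sum_{k} d_{kj} \;=\; \sum_{j} \Phi(g^{[j]}),
\end{align*}
since the columns of a doubly stochastic matrix also sum to $1$.

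The only step that is not routine is the Hardy--Littlewood--P\'olya representation itself, and this will be the main obstacle if one wants a self-contained proof. A self-contained alternative bypasses the matrix theorem entirely: by induction on the number of indices at which $\mathbf{f}$ and $\mathbf{g}$ differ, one shows that $\mathbf{f}$ is reached from $\mathbf{g}$ by a finite chain of Robin Hood transfers, each moving mass $\varepsilon > 0$ from a strictly larger coordinate to a strictly smaller one without reversing their order. Convexity directly gives the elementary inequality $\Phi(a-\varepsilon)+\Phi(b+\varepsilon) \leq \Phi(a)+\Phi(b)$ for $a \geq b$ and $0 \leq \varepsilon \leq (a-b)/2$, so each transfer weakly decreases $\sum \Phi(\cdot)$, and transitivity closes the argument. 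Since the statement is cited from the monograph \cite{MOA11:book}, either route is standard and needs no further elaboration.
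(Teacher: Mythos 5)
Your proof is correct: the reduction to vectors of equal length $n+1$ (legitimate here because the paper's definition of majorization requires $|\text{Supp}(f)|=|\text{Supp}(g)|$, so the $\Phi(0)$ terms match on both sides), the Hardy--Littlewood--P\'olya doubly stochastic representation, the row-wise Jensen step, and the Robin Hood alternative are all standard and sound. The paper itself offers no proof of this lemma---it is quoted verbatim from \cite[Proposition 3-C.1]{MOA11:book}---and your argument is exactly the standard proof of that cited result, so the two approaches coincide.
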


We note that by choosing a convex function $\Phi(x) = -x^\alpha$ for $\alpha\in (0,1)$, $\Phi(x)=x\log x$ for $\alpha=1$, and $\Phi(x)=x^\alpha$ for $\alpha\in (1,+\infty)$, 
and by taking limits when $\alpha\in \{0,\infty\}$, 
inequalities for R\'enyi entropies of all orders follow from Lemma \ref{lem:maj_to_convex} whenever we have a probability mass function majorized by another. 
%In order to do so, decorating terms for R\'enyi entropy by multiplying $-1$ for $\alpha<1$, then taking $\log(\cdot)$ and multiplying $\frac{1}{1-\alpha}$.
In particular, our two main theorems combined with Lemma \ref{lem:maj_to_convex}  yield the following propositions.

%Then Lemma \ref{lem:maj_to_convex} implies the following two different types of entropy inequalities. 
%We omit those proofs since they are direct consequences of Lemma \ref{lem:maj_to_convex} by choosing convex functions $\Phi(x)=x^{\alpha}$ for $\alpha\in(1,+\infty)$ and $\Phi(x)=-x^{\alpha}$ for $\alpha\in(0,1)$. Then for $\alpha\in\{0,1,+\infty\}$, we take limits of $\alpha$ as described in (N3) above.

\begin{prop}\label{thm:hashconcave}
If $X_1,\cdots,X_N$ are independent and $\#$-log-concave over $\mathbb{Z}$, then
\begin{align}
H_{\alpha}\left( X_1 + \cdots + X_N \right) \geq H_{\alpha} \left(  X_1^{\#} + \cdots + X_N^{\#} \right),
\end{align}
for $\alpha \in [0,\infty]$.
\end{prop}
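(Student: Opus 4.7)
The plan is to reduce the proposition to a direct combination of the majorization statement of Theorem~\ref{thm:main1} with the convex-ordering principle of Lemma~\ref{lem:maj_to_convex}, choosing $\Phi$ appropriately for each regime of the R\'enyi parameter. First I would invoke Theorem~\ref{thm:main1} to obtain that if $f$ denotes the probability mass function of $X_1+\cdots+X_N$ and $g$ denotes the probability mass function of $X_1^{\#}+\cdots+X_N^{\#}$, then $f\prec g$. From this, Lemma~\ref{lem:maj_to_convex} gives $\sum_{i}\Phi(f(i))\leq\sum_{i}\Phi(g(i))$ for every convex $\Phi:\mathbb{R}\to\mathbb{R}$.

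Next I would handle the three primary regimes of $\alpha$ separately. For $\alpha\in(1,\infty)$, take $\Phi(x)=x^{\alpha}$ (convex on $\mathbb{R}_+$) to get $\sum f^{\alpha}\leq \sum g^{\alpha}$; since $\frac{1}{1-\alpha}<0$, passing to logarithms reverses the inequality and produces $H_\alpha(f)\geq H_\alpha(g)$. For $\alpha\in(0,1)$, take $\Phi(x)=-x^{\alpha}$, which is convex because $x^{\alpha}$ is concave; Lemma~\ref{lem:maj_to_convex} then yields $\sum f^{\alpha}\geq \sum g^{\alpha}$, and since $\frac{1}{1-\alpha}>0$, again $H_\alpha(f)\geq H_\alpha(g)$. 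For $\alpha=1$, take $\Phi(x)=x\log x$ (with the convention $0\log 0 = 0$), which is convex, so $\sum f\log f\leq \sum g\log g$, i.e., $H_1(f)\geq H_1(g)$.

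The boundary cases $\alpha=0$ and $\alpha=\infty$ can be handled by taking one-sided limits of the already established inequalities, using the fact that $H_\alpha$ is continuous (in fact monotone non-increasing) in $\alpha\in[0,\infty]$ for any fixed distribution. Alternatively, one can argue directly: for $\alpha=\infty$, choose $\Phi(x)=(x-t)_+$ for each $t>0$ and let $t\uparrow \sup g$ to conclude $\sup f\leq \sup g$, giving $H_\infty(f)\geq H_\infty(g)$; for $\alpha=0$, one observes that $\#$-rearrangement can only collapse the convolution support (since $\mathrm{Supp}(X_1^{\#}+\cdots+X_N^{\#})\subseteq\{0,1,\ldots,\sum n_i\}$ while $\mathrm{Supp}(X_1+\cdots+X_N)$ may be strictly larger due to gaps in the $X_i$), so $|\mathrm{Supp}(f)|\geq |\mathrm{Supp}(g)|$ and hence $H_0(f)\geq H_0(g)$.

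I do not anticipate any serious obstacle: the entire content of the proposition is already packed into Theorem~\ref{thm:main1}, and the translation to R\'enyi entropy is mechanical once one lines up the convex function with the correct sign of the prefactor $\frac{1}{1-\alpha}$. The only mild subtlety is the $\alpha=0$ endpoint, where the majorization inequality itself does not directly compare support sizes (since the paper's definition pads to a common support size), so one must argue at the level of the original convolutions that $\#$-rearrangement can only shrink the support; this is immediate from the construction of $f^{\#}$ and the fact that the Minkowski sum of consecutive intervals is again an interval.
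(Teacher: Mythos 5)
Your proposal is correct and follows essentially the same route as the paper: the authors likewise obtain the majorization $f\prec g$ from Theorem~\ref{thm:main1} (via Proposition~\ref{prop:hashlogconcave}) and then apply Lemma~\ref{lem:maj_to_convex} with $\Phi(x)=x^{\alpha}$ for $\alpha>1$, $\Phi(x)=-x^{\alpha}$ for $\alpha\in(0,1)$, $\Phi(x)=x\log x$ for $\alpha=1$, and a limiting argument for $\alpha\in\{0,\infty\}$. Your direct treatments of the endpoints (noting $f^{[1]}\leq g^{[1]}$ for $\alpha=\infty$ and the sumset/support comparison for $\alpha=0$) are valid refinements but not needed beyond what the paper sketches.
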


R\'enyi entropy inequalities such as this (and others of similar form in our companion paper \cite{MWW17:1}) 
have already begun finding utility (see, e.g., \cite{MMX17:1, XMM18}).

\begin{prop}\label{thm:erdosmoser1}
	Let $0<a_1< \cdots < a_N$. If $Y_i$'s are independent random variables following $\text{Binomial}\left( m_i,\frac{1}{2} \right)$ for $1\leq m_N \leq m_{N-1}\leq \cdots \leq m_1$, then
	\begin{align}
	H_{\alpha}\left( a_1Y_1 + a_2Y_2+\cdots + a_NY_N \right) \geq H_{\alpha} \left( Y_1 + 2Y_2+ \cdots + NY_N \right),
	\end{align}
	for $\alpha \in [0,\infty]$.
\end{prop}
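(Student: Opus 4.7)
The plan is to derive Proposition~\ref{thm:erdosmoser1} as an immediate consequence of Theorem~\ref{thm:main2} paired with Lemma~\ref{lem:maj_to_convex}; this is the standard recipe, already flagged by the authors in the paragraph before the statement, for converting a majorization between probability mass functions into R\'enyi entropy inequalities of every order. Let $f$ be the PMF of $a_1 Y_1 + \cdots + a_N Y_N$ and $g$ the PMF of $Y_1 + 2 Y_2 + \cdots + N Y_N$. Theorem~\ref{thm:main2} gives $f \prec g$, so the task reduces to the purely functional claim that $f \prec g$ implies $H_\alpha(f) \geq H_\alpha(g)$ for every $\alpha \in [0,\infty]$.

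For $\alpha \in (0,1)$ I would apply Lemma~\ref{lem:maj_to_convex} with the convex function $\Phi(x) = -x^\alpha$ (convex because $x^\alpha$ is concave for $\alpha<1$), obtaining $\sum_i f(i)^\alpha \geq \sum_i g(i)^\alpha$; dividing by the positive quantity $1-\alpha$ and taking logarithms gives $H_\alpha(f) \geq H_\alpha(g)$. For $\alpha \in (1,\infty)$, using $\Phi(x) = x^\alpha$ yields $\sum_i f(i)^\alpha \leq \sum_i g(i)^\alpha$; the prefactor $\tfrac{1}{1-\alpha}$ is now negative and flips the inequality back into the desired direction. The Shannon case $\alpha = 1$ comes from $\Phi(x) = x \log x$, convex on $[0,\infty)$ with the convention $0\log 0 = 0$, which gives $\sum f\log f \leq \sum g \log g$, equivalently $H(f) \geq H(g)$.

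For the boundary values I would argue directly rather than rely on continuity in $\alpha$. The $k=1$ majorization condition reads $\max_i f(i) \leq \max_i g(i)$, so $H_\infty(f) \geq H_\infty(g)$ is immediate. For $\alpha = 0$, if $f$ were supported on strictly fewer points than $g$, say on $k < |\text{supp}(g)|$ points, then $\sum_{i=1}^{k} f^{[i]} = 1$ while $\sum_{i=1}^{k} g^{[i]} < 1$, contradicting $f \prec g$; hence $|\text{supp}(f)| \geq |\text{supp}(g)|$ and $H_0(f) \geq H_0(g)$. No step here presents a real obstacle: all the difficulty has already been absorbed into the proof of Theorem~\ref{thm:main2}, and the remainder is bookkeeping of signs and a short argument at each of the two endpoints.
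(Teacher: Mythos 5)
Your proposal is correct and follows exactly the route the paper takes: Theorem~\ref{thm:main2} supplies the majorization, and Lemma~\ref{lem:maj_to_convex} with $\Phi(x)=-x^\alpha$, $x\log x$, or $x^\alpha$ converts it into the R\'enyi inequality for each $\alpha$. The only (harmless) deviation is that you treat the endpoints $\alpha\in\{0,\infty\}$ by direct arguments from the $k=1$ and full-support majorization conditions, whereas the paper obtains them by taking limits; both are valid.
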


Nguyen \cite{Ngu12} observed that the optimal solution of the Erd\H{o}s-Moser problem (for Bernoulli $Y_i$) minimizes the 
variance of $a_1Y_1 + a_2Y_2+\cdots + a_NY_N$ among all choices of distinct positive weights, i.e.,
for $0<a_1\neq \cdots \neq a_N$,
\begin{align*}
\text{Var}\left(Y_1 + 2Y_2+\cdots + NY_N\right)\leq \text{Var}\left(a_1Y_1 + a_2Y_2+\cdots + a_NY_N\right).
\end{align*}
Proposition \ref{thm:erdosmoser1} implies that the optimal solution of the Erd\H{o}s-Moser problem 
also minimizes the R\'enyi entropy for any order $\alpha\in[0,+\infty]$ (and even for the more general
binomial setting). 
%This observation seems interesting: for example,
%by taking $\alpha=2$ and finite sets $A_1, \ldots, A_N\subset \mathbb{Z}$ with , it implies that the number of solutions to the linear equation 
%\begin{align}\label{eq:collision}
%a_1 x_1 + \cdots + a_nx_n = a_1 x_1' + \cdots + a_nx_n' ,
%\end{align}
%where $x_i, x_i'\in A_i$ are equation variables, and the positive integer weights $a_i$ are distinct,
%is maximized when $a_1=\cdots=a_n=1$ and when $A_i=A_i^{\delta(A_i)}$.

\subsection{Inequalities for uniform distributions on subsets of $\mathbb{Z}^d$}

%The importance of an entropy power inequality is emphasized in Section \ref{sec:intro}. 
In \cite{WM15:isit}, we proved a discrete entropy power inequality for uniform distributions over finite subsets of the integers $\mathbb{Z}$. 
In the following lemma, we extend  \cite[Theorem II.2]{WM15:isit} from the $\alpha=1$ case to any R\'enyi entropy of order $\alpha\geq 1$. 
%The detailed proof of the lemma can be found in Section \ref{sec:pflem}.

\begin{lem}\label{lem:epi_uniforms}
If $X$ and $Y$ are independent and uniformly distributed over finite sets $A\subset \mathbb{Z}$ and $B\subset \mathbb{Z}$ respectively,
\begin{align}\label{eqn:EPI_uniform}
\mathcal{N}_\alpha(X+Y) + 1 \geq \mathcal{N}_\alpha(X) + \mathcal{N}_\alpha(Y),
\end{align}
where $\mathcal{N}_\alpha(X)=e^{(1+\alpha)H_\alpha(X)}$ for $\alpha\geq 1$.
\end{lem}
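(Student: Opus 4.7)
The plan has two stages: a reduction to intervals using Corollary~\ref{cor:setmajorization}, followed by an explicit analytic verification.

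First, I would use Corollary~\ref{cor:setmajorization} to reduce to the case where $A$ and $B$ are intervals of consecutive integers. Since the R\'enyi entropy of a uniform distribution depends only on the size of its support, $H_\alpha(X)=\log|A|$ and $H_\alpha(Y)=\log|B|$, so the right-hand side of \eqref{eqn:EPI_uniform} is just $|A|^{\alpha+1}+|B|^{\alpha+1}$. Corollary~\ref{cor:setmajorization} gives $A\star B \prec A^\#\star B^\#$; dividing by the total mass $|A|\,|B|$ then shows that the PMF of $X+Y$ is majorized by that of $X^\#+Y^\#$, where $X^\#$ and $Y^\#$ are uniform on $\{0,\ldots,|A|-1\}$ and $\{0,\ldots,|B|-1\}$ respectively. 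Because $\Phi(x)=x^\alpha$ is convex for $\alpha\geq 1$, Lemma~\ref{lem:maj_to_convex} yields $\sum_k p_{X+Y}(k)^\alpha \leq \sum_k p_{X^\#+Y^\#}(k)^\alpha$, equivalently $H_\alpha(X+Y)\geq H_\alpha(X^\#+Y^\#)$, so $\mathcal{N}_\alpha(X+Y)\geq \mathcal{N}_\alpha(X^\#+Y^\#)$. It therefore suffices to prove \eqref{eqn:EPI_uniform} when $A=\{0,1,\ldots,n\}$ and $B=\{0,1,\ldots,m\}$, and by symmetry I may assume $n\leq m$.

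Second, I would compute both sides of \eqref{eqn:EPI_uniform} in closed form in this trapezoidal case. The convolution satisfies $(A\star B)(k)=\min(k+1,\,n+1,\,n+m+1-k)$, and its nonzero values are $1,2,\ldots,n$ (each occurring twice, once on each ramp) together with $n+1$ (occurring $m-n+1$ times across the flat top), so
\[\sum_k p_{X+Y}(k)^\alpha \;=\; \frac{2\sum_{j=1}^n j^\alpha + (m-n+1)(n+1)^\alpha}{\bigl((n+1)(m+1)\bigr)^\alpha}.\]
Substituting this into $\mathcal{N}_\alpha(X+Y)=\bigl(\sum_k p^\alpha\bigr)^{(1+\alpha)/(1-\alpha)}$ (interpreted as the Shannon limit at $\alpha=1$) reduces \eqref{eqn:EPI_uniform} to an explicit analytic inequality in the three parameters $n,m\in\mathbb{Z}_{\geq 0}$ and $\alpha\geq 1$, in which the only nontrivial quantity is the power sum $\sum_{j=1}^n j^\alpha$.

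The main obstacle is verifying this explicit analytic inequality for all $\alpha\geq 1$ and all $0\leq n\leq m$. A tempting first attempt using Hölder's inequality only yields the weaker bound $\mathcal{N}_\alpha(X+Y)\geq (m+1)^{\alpha+1}$, which falls short of the desired $(n+1)^{\alpha+1}+(m+1)^{\alpha+1}-1$ by essentially the full $(n+1)^{\alpha+1}$ term, so a more delicate estimate is needed. The cleanest route I foresee is induction on $m$ for fixed $n$ and $\alpha$: the base case $m=n$ admits tight integral estimates of $\sum_{j=1}^n j^\alpha$ via comparison with $\int_0^{n+1} t^\alpha\,dt=(n+1)^{\alpha+1}/(\alpha+1)$, and the inductive step compares the increment of $\mathcal{N}_\alpha(X+Y)$ as $m\mapsto m+1$ against $(m+2)^{\alpha+1}-(m+1)^{\alpha+1}$. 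The $\alpha=1$ specialization recovers \cite[Theorem~II.2]{WM15:isit}, which provides both a template for the estimates and a sanity check that the ``$+1$'' discrete correction is of the correct order across $\alpha\in[1,\infty)$.
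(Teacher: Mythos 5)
Your first stage is correct and coincides with the paper's own reduction: the paper invokes Theorem~\ref{thm:main1} (via the observation that uniforms are $\#$-log-concave), while you invoke Corollary~\ref{cor:setmajorization}, which is the same statement specialized to indicator functions; combined with Lemma~\ref{lem:maj_to_convex} and the monotonicity of $\mathcal{N}_\alpha$ in $H_\alpha$, either route validly reduces \eqref{eqn:EPI_uniform} to the case of two intervals of consecutive integers. Your closed form for the trapezoidal convolution is also correct (each of the ramp values $1,\dots,n$ occurs twice and the plateau value $n+1$ occurs $m-n+1$ times), and your observation that H\"older only yields $\mathcal{N}_\alpha(X+Y)\geq(m+1)^{\alpha+1}$ is accurate.

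The gap is that the second stage --- the explicit analytic inequality in $(n,m,\alpha)$ --- is never actually proved; you only name a strategy (``induction on $m$ for fixed $n$ and $\alpha$'') without verifying either the base case or the inductive step. This inequality is the entire content of the lemma, and it is genuinely delicate: it holds with equality when $n=0$, it is asymptotically tight as $\alpha\to\infty$ already for $n=m=1$, and the two sides agree to leading order when $m\gg n$, so there is no slack to absorb crude estimates. The inductive step you propose requires showing that the increment of $\bigl(\sum_k p^\alpha\bigr)^{(1+\alpha)/(1-\alpha)}$ in $m$ dominates $(m+2)^{1+\alpha}-(m+1)^{1+\alpha}$, which is not visibly easier than the original statement, and the base case $m=n$ still needs a sharp two-sided control of $\sum_{j=1}^n j^\alpha$. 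By comparison, the paper's proof spends essentially all of its effort exactly here: it bounds the power sum by $\int_1^m x^\alpha\,dx$, substitutes $k=m/n$, reduces to $\xi(k,n)\leq(1+k^{1+\alpha})^{(1-\alpha)/(1+\alpha)}$, linearizes the right side, locates the critical point $k^*\in[\tfrac1n,\tfrac2n)$ of the resulting difference, and treats the boundary cases $m=1$ and $k=\tfrac2n$ separately. Until you carry out your induction (or an equivalent calculus argument) in full, the proposal establishes only the reduction, not the lemma.
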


%\section{Proof of Lemma \ref{lem:epi_uniforms}}\label{sec:pflem}
\begin{proof}
Since the $\alpha=1$ case is proved in \cite{WM15:isit}, we assume that $\alpha>1$. Since any uniform distribution over a finite set is $\#$-log-concave, Theorem \ref{thm:hashconcave} implies that
	\begin{align*}
	H_\alpha(X+Y) \geq H_\alpha(X^\# + Y^\#).
	\end{align*}
Sincet $\mathcal{N}_\alpha(X)=\mathcal{N}_\alpha(X^\#)$ and $N_\alpha(Y)=N_\alpha(Y^\#)$ hold trivially, it suffices for proving the inequality \eqref{eqn:EPI_uniform} 
to only consider the case where $A$ and $B$ are sets of consecutive integers. Indeed, if we proved this special case,  we would have
	\begin{align*}
	\mathcal{N}_\alpha(X+Y)+1 &\geq \mathcal{N}_\alpha(X^\# + Y^\#) +1 \\
	&\geq \mathcal{N}_\alpha(X^\#) + \mathcal{N}_\alpha(Y^\#) \\
	&= \mathcal{N}_\alpha(X) + \mathcal{N}_\alpha(Y).
	\end{align*} 
which is the desired statement.
	
What remains is to prove \eqref{eqn:EPI_uniform} for uniform distributions on finite sets of consecutive integers. 
Let $|A|=n$ and $|B|=m$. Since the roles of $X$ and $Y$ are symmetric, we may assume that $n\geq m$. 
While $H_\alpha(X)= \log n$ and $H_\alpha(Y) = \log m$ due to uniformity, a direct calculation easily shows that
	\begin{equation}\label{eq:halph}
	H_\alpha(X+Y) = \frac{1}{1-\alpha}\log\left[ \sum_{i=1}^{m-1}\frac{i^\alpha}{m^\alpha n^\alpha} + (n-m+1)\frac{1}{n^{\alpha}} \right]. 
	\end{equation}
First, observe that if $m=1$, then $H_\alpha(X+Y)=H_\alpha(X)$ and $H_\alpha(Y)=0$. Then
$\mathcal{N}_\alpha(X+Y) + 1 = \mathcal{N}_\alpha(X) + 1 = \mathcal{N}_\alpha(X) + \mathcal{N}_\alpha(Y)$,
and hence the inequality (\ref{eqn:EPI_uniform}) is sharp. 
Next, consider the expression inside the logarithm in the formula \eqref{eq:halph}:
\begin{align*}
\sum_{i=1}^{m-1}\frac{i^\alpha}{m^\alpha n^\alpha} + (n-m+1)\frac{1}{n^{\alpha}}&= n^{-\alpha}m^{-\alpha}\sum_{i=1}^{m-1}i^\alpha+ n^{1-\alpha}-mn^{-\alpha}+n^{-\alpha}\\
&\leq n^{-\alpha}m^{-\alpha} \int_{1}^{m}x^\alpha dx + n^{1-\alpha}-mn^{-\alpha}+n^{-\alpha}\\
&=n^{1-\alpha} \left[ 1+\frac{1}{n}-\frac{\alpha}{1+\alpha}\frac{m}{n} -\frac{1}{1+\alpha}\frac{m^{-\alpha}}{n} \right].
\end{align*}
Plugging this bound into \eqref{eq:halph}, setting $k=m/n \in [0,1]$ and writing
	\begin{align*}
	\xi(k,n) = 1 + \frac{1}{n} -\frac{\alpha}{1+\alpha}k - \frac{1}{1+\alpha}k^{-\alpha}n^{-(1+\alpha)} ,
	\end{align*}
we obtain the following lower bound for the R\'enyi entropy of $X+Y$:
\begin{align*}
H_\alpha(X+Y) \geq \frac{1}{1-\alpha} \left[ \log n^{1-\alpha} + \log \xi(k,n) \right].
\end{align*}
Then
\begin{align*}
\mathcal{N}_{\alpha}(X+Y) = e^{(1+\alpha)H_{\alpha}(X+Y)}\geq e^{(1+\alpha)\log n} e^{\frac{1+\alpha}{1-\alpha}\log\xi(k,n)}=:\nu(k,n).
\end{align*}
Since the $m=1$ case is already proved, if the following inequality is true, we are done.
\begin{align*}
\nu(k,n)=e^{(1+\alpha)\log n} e^{\frac{1+\alpha}{1-\alpha}\log\xi(k,n)} \geq e^{(1+\alpha)\log n} + e^{(1+\alpha)\log kn} = \mathcal{N}_{\alpha}(X)+\mathcal{N}_{\alpha}(Y).
\end{align*}
By rearranging terms, the above inequality is equivalent to
\begin{align*}
\xi(k,n) \leq \left( 1+k^{1+\alpha} \right)^{\frac{1-\alpha}{1+\alpha}}.
\end{align*}
When $k=\frac{2}{n}$, we can directly show the following inequality is true by elementary calculation:
\begin{align*}
\xi\left(\frac{2}{n},n \right)=1+\frac{1-\alpha}{1+\alpha}\frac{1}{n}-\frac{2^{-\alpha}}{1+\alpha}\frac{1}{n} \leq \left( 1+2^{1+\alpha}n^{-(1+\alpha)} \right)^{\frac{1-\alpha}{1+\alpha}}.
\end{align*}
Furthermore, it is easy to show that for $k\in[0,1]$ and $\alpha>1$
\begin{align*}
\left( 1+k^{1+\alpha} \right)^{\frac{1-\alpha}{1+\alpha}} \geq 1-\left(1-2^{\frac{1-\alpha}{1+\alpha}}\right)k.
\end{align*}
Hence it suffices to show 
\begin{align*}
\xi(k, n) \leq  1- \left(1-2^{\frac{1-\alpha}{1+\alpha}}\right)k
\end{align*}
for all $n\geq 2$ and $k\geq \frac{3}{n}$. Let $\phi(k,n):=1- \left(1-2^{\frac{1-\alpha}{1+\alpha}}\right)k - \xi(k,n)$. For a fixed $n\geq 2$,
\begin{align*}
\frac{\partial \phi }{\partial k} =-1 +2^{\frac{1-\alpha}{1+\alpha}}+\frac{\alpha}{1+\alpha}\left( 1-k^{-(1+\alpha)}n^{-(1+\alpha)}\right).
\end{align*}
Then $\frac{\partial \phi }{\partial k}=0$ at
\begin{align*}
k^*=\frac{1}{n} \alpha^{\frac{1}{1+\alpha}}\left[ -1 + 2^{\frac{1-\alpha}{1+\alpha}}+2^{\frac{1-\alpha}{1+\alpha}}\alpha \right]^{-\frac{1}{1+\alpha}},
\end{align*}
and $\frac{\partial \phi }{\partial k}>0$ for $k>k^*$ and $\frac{\partial \phi }{\partial k}<0$ for $k<k^*$. Finally, by elementary calculation, we can easily show that 
\begin{align*}
\frac{1}{n} \leq k^* < \frac{2}{n}.
\end{align*}
Thus, $\phi(k,n)$ is minimized at $k=\frac{3}{n}$ for $k\geq \frac{3}{n}$. By elementary calculation, we can also confirm that $\phi\left(\frac{3}{n},n\right)\geq 0$,
which completes the proof.

We note that when $k=\frac{2}{n}$, $\phi\left(\frac{2}{n},n\right) <0$ for some $n$;
this is why $\phi(k,n)\geq 0$ is only proved when $k\geq \frac{3}{n}$ rather than $k\geq \frac{2}{n}$. 
\end{proof}

We remark that $+1$ term on the left side of inequality \eqref{eqn:EPI_uniform} is only necessary for one-point mass distributions.
In other words, if If $X$ and $Y$ are independent and uniformly distributed over finite sets of cardinality at least 2, then
we in fact have 
\begin{align}
\mathcal{N}_\alpha(X+Y)  \geq \mathcal{N}_\alpha(X) + \mathcal{N}_\alpha(Y).
\end{align}
However, we highlight the formulation with $+1$ both because of the similarity with the Cauchy-Davenport Theorem \cite{Ruz09:2},
and because the discrete entropy power inequality over the integers in Lemma~\ref{lem:epi_uniforms} can be extended to the integer lattice $\mathbb{Z}^d$.

\begin{thm}\label{thm:discrete-epi}
If $X$ and $Y$ are uniform distributions over finite sets $A$ and $B$ in $\mathbb{Z}^d$,
\begin{align*}
\mathcal{N}_\alpha(X+Y) +1 \geq \mathcal{N}_\alpha(X)+\mathcal{N}_\alpha(Y),
\end{align*}
where $\mathcal{N}_\alpha(X)=e^{(1+\alpha)H_\alpha(X)}$ for $\alpha\geq 1$.
\end{thm}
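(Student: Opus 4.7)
The natural approach is to reduce Theorem~\ref{thm:discrete-epi} to its one-dimensional counterpart, Lemma~\ref{lem:epi_uniforms}, by embedding the support structure of $(X,Y,X+Y)$ into $\mathbb{Z}$ via a suitably chosen injective linear map. This kind of Freiman-isomorphism trick is standard in additive combinatorics and works well here because the R\'enyi entropy of a uniform distribution depends only on the cardinality of its support and, more generally, because entropy is invariant under bijective relabeling of the support.

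Concretely, I would first construct a map $L:\mathbb{Z}^d \to \mathbb{Z}$ of the form $L(x_1,\ldots,x_d) = \sum_{j=1}^d M^{j-1} x_j$, where $M$ is a positive integer strictly larger than $2\max\{|x_j|\, :\, x \in A\cup B,\, 1 \leq j \leq d\}+1$. A standard positional argument shows that $L$ is then injective on the finite set $A+B \subset \mathbb{Z}^d$, since the coordinate digits can be uniquely recovered modulo successive powers of $M$. Injectivity of $L$ on $A+B$ forces injectivity on $A$ and on $B$ as well: if $L(a_1)=L(a_2)$ for $a_1,a_2 \in A$, then for any fixed $b \in B$ we have $L(a_1+b)=L(a_2+b)$, whence $a_1+b=a_2+b$ and so $a_1=a_2$.

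Next, set $\tilde X := L(X)$ and $\tilde Y := L(Y)$. By independence of $X,Y$ and injectivity of $L$ on $A$ and $B$, the variables $\tilde X, \tilde Y$ are independent and uniformly distributed on finite subsets of $\mathbb{Z}$ with $|L(A)|=|A|$ and $|L(B)|=|B|$. Because $L$ is additive, $\tilde X + \tilde Y = L(X+Y)$, and injectivity of $L$ on $A+B$ guarantees that the probability mass function of $L(X+Y)$ is a permutation of that of $X+Y$. Hence
\begin{equation*}
H_\alpha(\tilde X)=H_\alpha(X), \quad H_\alpha(\tilde Y)=H_\alpha(Y), \quad H_\alpha(\tilde X+\tilde Y)=H_\alpha(X+Y),
\end{equation*}
and the same equalities hold with $\mathcal{N}_\alpha$ in place of $H_\alpha$. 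Applying Lemma~\ref{lem:epi_uniforms} to the independent uniform variables $\tilde X, \tilde Y$ on $\mathbb{Z}$ then yields the desired inequality. No real obstacle is anticipated here: the only content of the argument is the construction of the embedding $L$, after which the $d$-dimensional problem collapses to the one-dimensional case already established.
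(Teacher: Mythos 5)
Your proposal is essentially the paper's own proof: the authors also embed $A$ and $B$ into $\mathbb{Z}$ via a base-$q$ positional map $(z_1,\ldots,z_d)\mapsto z_1q^{d-1}+\cdots+z_d$ with $q$ chosen large enough that the convolution $A\star B$ is carried to $A'\star B'$, and then invoke Lemma~\ref{lem:epi_uniforms}. The only quibble is your choice of $M$: since the coordinates of elements of $A+B$ can be twice as large as those of $A\cup B$, injectivity of $L$ on $A+B$ requires $M$ to exceed (roughly) four times the coordinate bound rather than two times, but this is a harmless constant and does not affect the argument.
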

\begin{proof}
%We reduce the problem in $\mathbb{Z}^d$ to the same problem in $\mathbb{Z}$. Then by applying Lemma~\ref{lem:epi_uniforms}, the proof follows. 
%
Consider a point $\mathbf{z}=(z_1,\cdots,z_d)$ in $\mathbb{Z}^d$ where $z_i\geq 0$ for each $i$. 
%If $q$ is not big enough, we can choose arbitrarily large $q$ satisfying the desired conditions explained below. 
We regard $\mathbf{z}$ as a $q$-ary representation of an integer value, where $q$ is large and chosen later. 
In other words, the point $\mathbf{z}\in \mathbb{Z}^d$ can be mapped to a unique integer value in $\mathbb{Z}$:
\begin{align}\label{eq:q-ary}
\mathbf{z}=(z_1,\cdots,z_d) \mapsto z_1q^{d-1}+z_2q^{d-2}+\cdots+z_d \in \mathbb{Z}.
\end{align}

For the set $A$ in $\mathbb{Z}^d$, without loss of generality, we can shift $A$ so that each point contains only non-negative components. 
Let $A'$ be the set in $\mathbb{Z}$ equivalent to $A$ via the $q$-ary representation \eqref{eq:q-ary}. 
Similarly we can find the set $B'$ in $\mathbb{Z}$ equivalent to $B$ in $\mathbb{Z}^d$. 
We choose $q$ large enough so that $A\star B$ in $\mathbb{Z}^d$ maps to $A' \star B'$ in $\mathbb{Z}$ via the $q$-ary representation.
 
Let $X'$ and $Y'$ be uniform distributions on $A'$ and $B'$ in $\mathbb{Z}$, respectively. This implies
\begin{align*}
H_\alpha(X+Y) = H_\alpha(X'+Y') \geq H_\alpha(X'^{\#}+Y'^{\#}).
\end{align*}
Then the conclusion follows by applying Lemma~\ref{lem:epi_uniforms} and from the fact that $H_\alpha(X)=H_\alpha(X')$ and $H_\alpha(Y)=H_\alpha(Y')$.
\end{proof}

Theorem~\ref{thm:discrete-epi} uses the exponent $c=1+\alpha$; R\'enyi entropy power inequalities with the same exponent in $\mathbb{R}$
were recently explored by Bobkov and Marsiglietti \cite{BM17} (although it was shown soon after by Li \cite{Li18} that this exponent can be improved).
In fact, these authors proved similar inequalities in $\mathbb{R}^d$, with the exponent $\frac{1+\alpha}{d}$, mimicking the $2/d$ exponent in
the original Shannon-Stam entropy power inequality.

The inequality we derived in Theorem \ref{thm:discrete-epi} is independent of the dimensional factor $d$, 
and one might wonder whether a entropy power inequality in the integer lattice $\mathbb{Z}^d$
that respects the dimension exists. Even for the subclass of uniform distributions and for $\alpha=1$, however, one can
easily construct counterexamples  showing that an exponent of $2/d$ fails in general in $\mathbb{Z}^d$.
% dependent of the dimensional factor $d$ under a certain dimensional assumption, for example,
%\begin{align}\label{eq:ddimepi}
%\mathcal{N}_{\alpha}^d(X+Y)  +1 \geq \mathcal{N}_{\alpha}^d(X)+\mathcal{N}_{\alpha}^d(Y),
%\end{align}
%where $\mathcal{N}_{\alpha}^d(X)=e^{\frac{1+\alpha}{d}H(X)}$. A proper dimensional assumption of $X$ and $Y$ is still open to the authors to achieve the inequality \eqref{eq:ddimepi}. 
We remark that in order to develop a discrete Brunn-Minkowski inequality in the integer lattice,  Gardner and Gronchi~\cite{GG01}
imposed a natural and appropriate dimensional assumption, the main point of which is that at least two points
should be assigned to each axis direction. However, the dimensional assumption from \cite{GG01} is still not sufficient to 
obtain an improvement of  Theorem \ref{thm:discrete-epi} with exponent $\frac{1+\alpha}{d}$ (as can be checked by counterexamples).
Hence we leave the discovery of appropriate dimensional entropy inequalities in the integer lattice as an open question for future works.

\section{Background on Sperner Theory}\label{section:poset}

In this section, we summarize the basic elements of Sperner Theory as needed for our proofs. 
A comprehensive summary can be found in books by Stanley \cite{Sta12:book} and Engel \cite{Eng97:book}.

\subsection{Partially ordered set (poset)}
A set $S$ with a binary relation $\preccurlyeq$ is said to be \textit{partially ordered} if the relation $\preccurlyeq$ satisfies the 
reflexive, anti-symmetric, and transitive properties, i.e., for any $a,b,c\in S$,
\begin{itemize}
	\item {$a\preccurlyeq a$ (reflexive),}
	\item {if $a\preccurlyeq b$ and $b\preccurlyeq a$, then $a=a$ (antisymmetric),}
	\item {if $a\preccurlyeq b$ and $b\preccurlyeq c$, then $a\preccurlyeq c$ (transitive).}
\end{itemize}
We emphasize that we use the symbol  $\prec$ to represent majorization, and this has no relation to the partial order $\preceq$. 
If $S$ is partially ordered, we call $S$ a \textit{partially ordered set}, or a \textit{poset}. 
For $a,b\in S$, $a$ and $b$ are \textit{comparable} if $a\preccurlyeq b$ or $b\preccurlyeq a$. 
Otherwise, $a$ and $b$ are \textit{incomparable}. A \textit{chain poset} is a poset in which any two elements are comparable. 
A subset $C$ of $S$ is called a \textit{chain} of $S$ if $C$ is a chain poset as a sub-poset of $S$. 
We define \textit{the length} of a chain $C$ to be the number of elements in $C$.

A subset $A$ of $S$ is called an \textit{antichain} if any two distinct elements of $A$ are incomparable. A subset $K$ of $S$ is called a \textit{$k$-family} of $S$ if it is a union of at most $k$ antichains.
We say a poset $S$ is \textit{weighted} if each element has a positive weight. 
The \textit{weight function} $w:S\to\mathbb{R}_+$ defines the weight of each element in $S$. We use a triple $\left(S,w,\preccurlyeq \right)$ to represent the weighted poset, but we sometimes omit to mention the weight function $w$ explicitly when we describe a weighted poset. If the poset has no weight function (or \textit{unweighted}), we implicitly assume that each weight of an element is $1$.

A chain $C$ of $S$ is called \textit{maximal} if there is no larger chain $C'$ such that $C\subseteq C'$. An element $s$ in $S$ is called \textit{minimal} if $t\preceq s$ implies $s=t$ in $S$. The element $s$ of $S$ is said to \textit{cover} the element $t$ of $S$ if $t\preceq s$ and if $t\preceq s'\preceq s$ implies $s=s'$ when $s'\neq t$. If every maximal chain of the poset $S$ has length $n+1$, we call $S$ a \textit{graded poset} with rank $n$. In such a case, we can define a unique \textit{rank function} $\rho:S\to\{0,1,\cdots, n\}$ of $S$ such that $\rho(a)=0$ if $a$ is a minimal element of $S$, and $\rho(b)=\rho(a)+1$ if $b$ covers $a$. Then the \textit{rank of $a$} is assigned to be $\rho(a)$. Given a weighted and ranked poset $\left(S,\preceq,w\right)$, the sum of all weights at the same rank $r\in\{0,1,\cdots, n\}$ is called the \textit{weighted Whitney number of the rank $r$}. Similarly, if the poset is unweighted, the \textit{Whitney number of the rank $r$} is the total number of elements at the rank $i$. We say that weighted Whitney numbers are \textit{log-concave} if the sequence of weighted Whitney numbers is log-concave in an increasing order of the rank. We say that weighted Whitney numbers are \textit{rank-symmetric} if the sequence of weighted Whitney numbers is symmetric in an increasing order of the rank. Similarly, we say that weighted Whitney numbers are \textit{rank-unimodal} if the sequence of weighted Whitney numbers is unimodal in an increasing order of the rank.

The weighted and ranked poset $\left(S,\preceq,w\right)$ has \textit{$k$-Sperner property} if the maximum total weight among all $k$-families in $S$ equals the largest sum of $k$ weighted Whitney numbers. The weighted and ranked poset $\left(S,\preceq,w\right)$ is \textit{strongly Sperner} (or has the \textit{strong Sperner property}) if it is $k$-Sperner for all $k=1,2,\cdots$.

The \textit{product} of the posets $S$ and $T$ is defined to be the Cartesian product $S\times T$, equipped with the partial order defined by 
the requirement that $(s,t)\preceq (s',t')$ in $S\times T$ if and only if $s\preceq s'$ in $S$ and $t\preceq t'$ in $T$. 
If $S$ and $T$ are weighted with weight functions $w_S$ and $w_T$, then the weight function $w_{S\times T}$ of $S\times T$ is defined to be $w_{S\times T}(s,t)=w_S(s)w_T(t)$.

\subsection{Normalized matching property}
Consider a ranked and weighted poset $\left(S,w,\preccurlyeq \right)$ with the rank function $\rho$. For any subset $A$ of $S$, 
we define the \textit{upper shade of $A$}, denoted $\nabla(A)$, as the set of all elements covering $A$. If $a'\in \nabla(A)$, 
then there exists an element $a\in A$ such that $a\preceq a'$ and $\rho(a')=\rho(a)+1$.

Let $N_r$ be the collection of all elements at rank $r$. A ranked and weighted poset $\left(S,w,\preccurlyeq \right)$ is called \textit{normal} if for any antichain $A$ subject to a subset of elements of rank $r$, the weight sum ratio of $A$ with respect to the weighted Whitney number of rank $r$ is less than or equal to the weight sum ratio of the shade of $A$ at rank $r+1$ with respect to the weighted Whitney number of rank $r+1$, i.e.,
\begin{align}\label{eq:weightsumratio}
\frac{w(A)}{w(N_i)}\leq \frac{w(\nabla (A))}{w(N_{r+1})}
\end{align}
where $A\subseteq N_i$ is an antichain, $w(A)$ is the sum of all weights of elements in $A$, and $w(N_{r})$ is the weighted Whitney number of the rank $r$. %, and $\nabla(A)$ is the collection of elements at rank $r+1$ connected to $A$ (i.e. $\nabla(A) = \{ q_{r+1}\in N_{r+1} : q_r \preccurlyeq q_{r+1} \text{for each } q_r\in A \}$).
Hsieh and Kleitman \cite{HK73} proved that the normalized matching property is preserved under the product of normal posets if it assumes log-concave weighted Whitney numbers.
\begin{prop}[See {\cite[Theorem 4.6.2]{Eng97:book} or \cite{HK73}}]\label{lemma_hsieh}
A product of two normal posets with log-concave weighted Whitney numbers is again a normal poset with log-concave weighted Whitney numbers.
\end{prop}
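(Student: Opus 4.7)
The claim combines two facts—log-concavity of the weighted Whitney numbers of $S\times T$, and preservation of the normalized matching property under products—so the plan is to treat them separately. For log-concavity, the multiplicativity of the weight and additivity of the rank in a product poset give at once
$$W_r(S\times T) \;=\; \sum_{i+j=r} W_i(S)\, W_j(T),$$
so the Whitney sequence of $S\times T$ is the convolution of those of $S$ and $T$. Log-concavity then reduces to the classical fact that the convolution of two non-negative log-concave sequences with no internal zeros is again log-concave; I would prove this by expanding $W_r(S\times T)^2 - W_{r-1}(S\times T)\,W_{r+1}(S\times T)$ and pairing cross terms using $W_i(S)^2 \ge W_{i-1}(S)W_{i+1}(S)$ and its $T$-analogue.

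For preservation of normality, fix an antichain $A \subseteq N_r(S\times T)$. The strategy is to slice $A$ along each factor: for $t \in T$ set $A_t = \{s : (s,t) \in A\}$, and for $s \in S$ set $A^s = \{t : (s,t) \in A\}$. Because $A$ is an antichain, each $A_t$ is an antichain of $S$ at rank $r-\rho_T(t)$, and each $A^s$ is an antichain of $T$ at rank $r-\rho_S(s)$. Applying the normality of $S$ to every $A_t$, and of $T$ to every $A^s$, yields slicewise lower bounds on $w_S(\nabla_S A_t)$ and $w_T(\nabla_T A^s)$ in terms of ratios of Whitney numbers of the factors. The shade in the product decomposes as
$$\nabla(A) \;=\; \bigcup_{t} \nabla_S(A_t)\times\{t\} \;\cup\; \bigcup_{s}\{s\}\times\nabla_T(A^s),$$
so $w(\nabla A)$ is controlled by these slicewise pieces once one accounts for the overlap between the two unions.

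To finish, the plan is to pass through the equivalence between normality and the existence of a normalized matching—a weighted flow from rank $r$ to rank $r+1$ whose flow ratios equal the ratios of Whitney numbers. Matchings on $S$ and on $T$ combine multiplicatively to a matching on $S\times T$, provided the Whitney sequences of the factors are log-concave so that one can rescale the product flow rank by rank. Equivalently, the slicewise bounds yield a Chebyshev-type inequality of the form
$$\sum_{i+j=r}\Bigl(\alpha_{ij}\,\frac{W_{i+1}(S)}{W_i(S)} + \beta_{ij}\,\frac{W_{j+1}(T)}{W_j(T)}\Bigr) \;\ge\; \frac{W_{r+1}(S\times T)}{W_r(S\times T)}\sum_{i+j=r}(\alpha_{ij}+\beta_{ij})$$
with nonnegative weights $\alpha_{ij},\beta_{ij}$ read off from the slices of $A$; monotonicity of $W_{i+1}/W_i$, which is exactly log-concavity of each factor, is what lets a single choice of weights work uniformly across all slices.

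The main obstacle is precisely this last combining step. The slice bounds are easy, but fusing them into an inequality on $\nabla(A)$ requires either a careful overlap analysis in the union above or the matching/flow reformulation where the product construction is transparent. I would follow the matching route, as in Hsieh–Kleitman, since it isolates the role of log-concavity cleanly and bypasses the case analysis of overlaps; once the matchings are tensored, the desired ratio inequality $w(A)/W_r(S\times T) \le w(\nabla A)/W_{r+1}(S\times T)$ falls out immediately.
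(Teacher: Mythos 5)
First, note that the paper itself offers no proof of this proposition: it is quoted directly from Hsieh--Kleitman \cite{HK73} (see also Engel \cite[Theorem 4.6.2]{Eng97:book}), so there is no in-paper argument to compare yours against. On its own terms, your plan handles the easier half correctly: the weighted Whitney numbers of $S\times T$ are the convolution of those of $S$ and $T$, Whitney numbers of a graded poset have no internal zeros, and the convolution of two positive log-concave sequences is log-concave -- that part is standard and your proposed verification would go through.

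The gap is in the normality half, and it sits exactly where you flag the ``main obstacle'' -- but you do not actually cross it. The slicewise bounds $w_S(\nabla_S A_t)\ge \frac{W_{i+1}(S)}{W_i(S)}\,w_S(A_t)$ are immediate, but because the two unions in your shade decomposition overlap, summing the slice bounds overcounts and yields no lower bound on $w(\nabla A)$ without a fractional assignment of each element of $\nabla(A)$ to the slices producing it. Your proposed remedy -- ``matchings on $S$ and $T$ combine multiplicatively\dots provided the Whitney sequences are log-concave so that one can rescale the product flow rank by rank'' -- is essentially the theorem restated, not proved. Concretely, a normalized flow on $S\times T$ from level $r$ to level $r+1$ must split the outflow at each element of bi-rank $(i,j)$ with $i+j=r$ between the $S$-direction and the $T$-direction; the splitting coefficients must depend only on $(i,j)$ and be chosen so that the induced inflow at every element of level $r+1$ is again proportional to its weight. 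Exhibiting a non-negative solution of that linear system is the entire content of Hsieh--Kleitman's argument, and it requires an actual computation with the ratios $W_{i+1}(S)/W_i(S)$ and $W_{j+1}(T)/W_j(T)$ against the convolution, not merely the remark that these ratios are monotone; your displayed ``Chebyshev-type inequality'' leaves the weights $\alpha_{ij},\beta_{ij}$ undefined and does not address the double counting, so it cannot be checked. Since the paper treats this as a black-box citation, doing the same would be legitimate; a self-contained proof needs the flow construction carried out in full.
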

\begin{prop}[See {\cite[Corollary 4.5.3]{Eng97:book}}]\label{proposition_normal_strongly_spermer}
A normal poset is strongly Sperner.
\end{prop}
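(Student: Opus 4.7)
The plan is to deduce the strong Sperner property from a weighted LYM-type inequality, which itself follows from the normalized matching condition via a fractional matching argument. Write $W_r := w(N_r)$ for brevity.

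First, note that since any subset of a single rank is automatically an antichain, the normalized matching inequality $w(\nabla(A))/W_{r+1} \geq w(A)/W_r$ holds for every $A \subseteq N_r$, not just for antichains that happen to sit there. Rewritten as a Hall-type condition on the bipartite graph between $N_r$ and $N_{r+1}$ whose edges are the cover pairs, a weighted form of Hall's theorem (equivalently, a max-flow / min-cut argument on an associated flow network) yields a fractional matching $m_r : N_r \times N_{r+1} \to \mathbb{R}_{\geq 0}$ supported on covering pairs with
\[
\sum_b m_r(a,b) = \frac{w(a)}{W_r} \quad\text{for every } a \in N_r, \qquad \sum_a m_r(a,b) \leq \frac{w(b)}{W_{r+1}} \quad\text{for every } b \in N_{r+1}.
\]

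Second, concatenating these fractional matchings across consecutive ranks produces a fractional chain partition of $S$ in which the total weight of chains passing through any $a \in S$ equals $w(a)/W_{\rho(a)}$. Since each fractional chain meets any antichain $A \subseteq S$ in at most one element, summing gives the weighted LYM inequality
\[
\sum_{a \in A} \frac{w(a)}{W_{\rho(a)}} \leq 1.
\]
By Mirsky's theorem, any $k$-family $F$ decomposes into $k$ antichains $A_1,\dots,A_k$ (taking $A_j$ to be the elements of $F$ whose longest chain in $F$ ending at them has length exactly $j$), and summing LYM over the $A_j$ yields $\sum_{a \in F} w(a)/W_{\rho(a)} \leq k$. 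Combining this with the trivial bound $w(F \cap N_r) \leq W_r$, a short linear-programming argument shows that $w(F) \leq \sum_{r \in T} W_r$, where $T$ indexes the $k$ largest Whitney numbers. This is precisely the $k$-Sperner property, and since $k$ is arbitrary, $S$ is strongly Sperner.

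The main obstacle is the first step: turning the antichain-based normalized matching condition into a concrete compatible family of fractional matchings that respects the cover relation simultaneously at every rank. While the Hall condition guarantees existence of such a matching at each single level, invoking the LP/flow framework is what produces the cover-respecting matchings in a coherent way that can be concatenated into a chain partition. Once the fractional matchings are secured, both the weighted LYM bound and the $k$-family optimization are routine.
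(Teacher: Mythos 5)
The paper offers no proof of this proposition: it is imported verbatim from Engel's book (Corollary 4.5.3), so there is no internal argument to compare yours against. Your proposal is a correct reconstruction of the standard textbook proof, and the route you take (normalized matching $\Rightarrow$ level-by-level fractional matchings $\Rightarrow$ regular fractional covering by saturated chains $\Rightarrow$ weighted LYM $\Rightarrow$ $k$-Sperner) is essentially the one Engel uses. Two small points are worth tightening. First, the concatenation step silently needs flow conservation at every intermediate vertex, i.e.\ $\sum_a m_r(a,b) = w(b)/W_{r+1}$ with \emph{equality}, not just $\leq$; this is automatic, since summing your inequality over $b \in N_{r+1}$ gives $1$ on both sides (as $\sum_a w(a)/W_r = 1 = \sum_b w(b)/W_{r+1}$), forcing termwise equality --- but it is the hinge on which the chain decomposition turns, so say it. Second, Mirsky's theorem is unnecessary: a $k$-family is by definition a union of at most $k$ antichains (which you may disjointify), and in any case each saturated chain of your covering meets a $k$-family $F$ in at most $k$ elements, which yields $\sum_{a\in F} w(a)/W_{\rho(a)} \leq k$ in one line without decomposing $F$ at all. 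The concluding linear-programming step (at an optimum of maximizing $\sum_r x_r$ subject to $0\le x_r\le W_r$ and $\sum_r x_r/W_r\le k$, at most one coordinate is strictly between its bounds, whence the support has size at most $k$) is routine and correct as you sketch it.
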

Applying Proposition \ref{lemma_hsieh} and Proposition \ref{proposition_normal_strongly_spermer} to chain posets, we have the following Corollary.
\begin{cor}\label{cor:stronglyspernerchain}
For each $i\in\{1,\cdots,N\}$, assume that $S(m_i)$ has log-concave weighted Whitney numbers. Then the product of chain posets $S(m_1,\cdots,m_N)$ is strongly Sperner with log-concave weighted Whitney numbers.
\end{cor}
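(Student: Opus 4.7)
The plan is to derive this from the two propositions cited immediately before the corollary, by induction on $N$. The two ingredients are: (i) each chain poset $S(m_i)$ is normal, and (ii) each $S(m_i)$ has log-concave weighted Whitney numbers (the latter is given as a hypothesis).

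First I would verify (i): in a chain poset, every rank $r$ consists of a single element, so the only antichain contained in $N_r$ is either $\emptyset$ or $N_r$ itself. In both cases the ratio inequality \eqref{eq:weightsumratio} holds with equality, since $\nabla(N_r)=N_{r+1}$ (when $r$ is not the top rank). Thus each $S(m_i)$ is normal with log-concave weighted Whitney numbers.

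Next, I would proceed by induction on $N$. The base case $N=1$ is just statement (i) combined with the hypothesis. For the inductive step, assume that $S(m_1,\ldots,m_{N-1})$ is normal with log-concave weighted Whitney numbers. Applying Proposition \ref{lemma_hsieh} to the two normal posets $S(m_1,\ldots,m_{N-1})$ and $S(m_N)$ (both of which have log-concave weighted Whitney numbers, by the inductive hypothesis and by (i)), we conclude that their product, which is precisely $S(m_1,\ldots,m_N)$, is again normal with log-concave weighted Whitney numbers.

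Finally, applying Proposition \ref{proposition_normal_strongly_spermer} to $S(m_1,\ldots,m_N)$ yields the strong Sperner property, completing the proof. The only mild subtlety is the routine check that chain posets are normal; everything else is a direct invocation of the two cited propositions. There is no real obstacle here, as the corollary is essentially a specialization of Hsieh--Kleitman's result to chain posets.
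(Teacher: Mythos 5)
Your proposal is correct and follows essentially the same route as the paper: observe that any weighted chain is normal because the ratio in \eqref{eq:weightsumratio} is always $1$, then combine Proposition \ref{lemma_hsieh} with Proposition \ref{proposition_normal_strongly_spermer}. The only difference is that you make explicit the induction on $N$ needed to iterate the two-factor product statement, which the paper leaves implicit.
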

\begin{proof}
Since the weight sum ratio in \eqref{eq:weightsumratio} of any antichain for a chain poset is always $1$, any weighted chain is normal. Thus, the conclusion follows from Proposition \ref{lemma_hsieh} and Proposition \ref{proposition_normal_strongly_spermer}.
\end{proof}
For further discussion, we need to define an order isomorphism between two posets. We say that two posets $(Q,\preccurlyeq)$ and $(R,\preccurlyeq)$ are \textit{isomorphic} if there exists a bijective map $\phi:Q\to R$ such that $q_1\preccurlyeq q_2$ iff $\phi(q_1) \preccurlyeq \phi(q_2)$ for $q_1,q_w\in Q$ and $\phi(q_1),\phi(q_2)\in R$.

\subsection{Strongly Sperner posets}

Let $N, n_1,\cdots, n_N$ be fixed positive integers. A product of chain posets $S(n_1,\cdots, n_N)$ can be defined to be a collection of $N$-tuples of integers $(a_1,\cdots,a_N)$ such that $0\leq a_i \leq n_i$ for each $i \in \{1,\cdots,N\}$. The relation $a=(a_1,\cdots, a_N) \preccurlyeq b=(b_1,\cdots, b_N)$ iff $a_i \leq b_i$ for each $i\in\{1,\cdots,N\}$. Figure 1 shows an example of the product poset $S(2,3)$. %Proposition \ref{proposition_normal_strongly_spermer} implies the following.
%\begin{cor}\label{corollary_product_chain_poset_peck}
%A product of normal chain posets $S(m_1,\cdots,m_n)$ is again normal, so strongly Sperner.
%\end{cor}

Next, we introduce the poset $M(m)$, a collection of $m$-tuples $(a_1,\cdots,a_m)$ such that $0=a_1=\cdots=a_i < a_{i+1}<\cdots<a_m\leq m$ with $i\in\{0,\cdots, m\}$. As noted, we allow one exceptional case $i=0$. If $i=0$, we mean $a_1>0$. So $0<a_1 < a_{2}<\cdots<a_m\leq m$. The relation $a=(a_1,\cdots, a_n) \preccurlyeq b=(b_1,\cdots, b_m)$ iff $a_i\leq b_i$ for all $i$. The rank $\rho(a)=\sum_{i=1}^{m} a_i$. 

Stanley \cite{Sta80} originally proved that $M(n)$ is rank-symmetric, rank-unimodal, and strongly Sperner leveraging ideas from algebraic geometry. 
After that, Proctor \cite{Pro82} gave a more accessible proof requiring the background of basic linear algebra. 
Following conventional terminology, we say that a ranked poset is \textit{Peck} if the poset is rank-symmetric, rank-unimodal, and strongly Sperner.

\begin{lem}[See \cite{Sta80,Pro82}]\label{lem:mnpeck}
	The poset $M(m)$ is a Peck poset.
\end{lem}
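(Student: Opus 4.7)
The plan is to establish the three properties constituting the Peck condition—rank-symmetry, rank-unimodality, and the strong Sperner property—following the algebraic strategy initiated by Stanley and made more accessible by Proctor. Rank-symmetry is a purely combinatorial matter and can be dispatched by a bijection; the deeper two statements are best handled together via an $\mathfrak{sl}_2(\mathbb{Q})$-action on the vector space attached to $M(m)$.

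For rank-symmetry, observe that each element $a=(0,\ldots,0,a_{i+1},\ldots,a_m)\in M(m)$ is determined by the subset $S(a):=\{a_{i+1},\ldots,a_m\}\subseteq\{1,\ldots,m\}$, and its rank satisfies $\rho(a)=\sum_{s\in S(a)}s$. Since the maximum rank is $N:=\binom{m+1}{2}$, the assignment $a\mapsto a'$ with $S(a'):=\{1,\ldots,m\}\setminus S(a)$ (arranged increasingly and padded with leading zeros) is an involution sending a rank-$r$ element to a rank-$(N-r)$ element, which yields rank-symmetry of the Whitney numbers.

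For the remaining two properties, form the graded rational vector space $V=\bigoplus_{r=0}^{N}V_r$, where $V_r$ has basis indexed by the rank-$r$ elements of $M(m)$. Following Proctor, I would construct linear maps $U:V_r\to V_{r+1}$ and $D:V_{r+1}\to V_r$, together with the diagonal operator $H$ that acts on $V_r$ by the scalar $2r-N$, so that $[H,U]=2U$, $[H,D]=-2D$, and $[U,D]=H$. The map $U$ sends a basis vector $a$ to a positive linear combination of the elements covering $a$ (those obtained by incrementing a single coordinate while preserving the defining inequalities of $M(m)$), and $D$ is defined analogously on elements covered. The coefficients must be finely tuned so as to produce the required commutator. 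Verifying $[U,D]=H$ is the main technical obstacle, and amounts to a careful case analysis of how a single-coordinate increment can be composed with, or undone by, a decrement, accounting for the structure of consecutive runs of equal zeros and strictly increasing positive coordinates in elements of $M(m)$.

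Once the $\mathfrak{sl}_2$-structure is in place, the conclusions follow from standard representation theory. Decomposing $V$ into irreducible $\mathfrak{sl}_2$-modules, each of which contributes a symmetric unimodal string of weight multiplicities to the sequence $(\dim V_r)_r$, gives rank-unimodality of the Whitney numbers of $M(m)$. The strong Sperner property follows from the Hard-Lefschetz-type fact that the iterated raising operator $U^{r-s}:V_s\to V_r$ is injective whenever $s\le r\le N-s$; this injectivity, via Stanley's argument relating such rank-level injections to bounds on $k$-family sizes, implies that every $k$-family of $M(m)$ has cardinality at most the sum of the $k$ largest Whitney numbers. Combining rank-symmetry, rank-unimodality, and the strong Sperner property gives that $M(m)$ is Peck.
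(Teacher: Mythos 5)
The paper does not actually prove this lemma: it is imported verbatim from Stanley \cite{Sta80} and Proctor \cite{Pro82}, so the only meaningful comparison is with those sources, whose strategy you have correctly identified. Your rank-symmetry argument is complete and correct: elements of $M(m)$ biject with subsets $S\subseteq\{1,\dots,m\}$ with $\rho=\sum_{s\in S}s$, and complementation within $\{1,\dots,m\}$ is a rank-reversing involution about $N=\binom{m+1}{2}$. The logical skeleton of the rest is also the right one: an order-raising operator $U$ fitting into an $\mathfrak{sl}_2$-triple forces, by complete reducibility, bijectivity of $U^{N-2r}\colon V_r\to V_{N-r}$, and the Proctor--Saks--Sturtevant/Stanley criterion then delivers rank-unimodality and the strong Sperner property, hence Peck.

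There is, however, a genuine gap at exactly the point where the lemma's content lives: you never construct $U$ and $D$, and you never verify $[U,D]=H$. Writing that ``the coefficients must be finely tuned'' and that the verification ``amounts to a careful case analysis'' is a statement of intent, not a proof; the existence of such coefficients for $M(m)$ is precisely what is hard, which is why Stanley originally had to invoke the hard Lefschetz theorem and why Proctor's elementary argument still requires either an explicit, rather delicate choice of coefficients on the covering relations of $M(m)$ (adjoining the element $1$ to $S$, or replacing $j\in S$ by $j+1\notin S$) together with a nontrivial commutator computation, or an appeal to the weight decomposition of a representation of a larger Lie algebra restricted to a principal $\mathfrak{sl}_2$. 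For a generic graded poset no such triple exists, so this step cannot be waved through. To close the gap you must either exhibit the coefficients and carry out the case analysis of $[U,D]$ on each basis vector, or explicitly reduce to the known representation-theoretic fact as Proctor does; as written, your proposal establishes rank-symmetry and correctly reduces the remainder to an unproved assertion.
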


%Along with Lemma \ref{lem:mnpeck},  
Proctor, Saks, and Sturtevant \cite{PSS80} proved that the Peck property is invariant under the product of posets.

\begin{lem}[See {\cite[Theorem 3.2]{PSS80}}]\label{lem:productpeck}
	A product of Peck posets is again Peck, and hence strongly Sperner.
\end{lem}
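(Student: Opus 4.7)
The plan is to follow the $\mathfrak{sl}_2(\mathbb{C})$-representation strategy of Proctor, Saks, and Sturtevant, based on the characterization: a graded poset $P$ of rank $n$ is Peck if and only if there exist linear operators $U_P$ and $D_P$ on the complex vector space $V(P) := \mathbb{C}[P]$ such that $U_P$ is order-raising (for each $x \in P$, $U_P x$ is a linear combination of the elements covering $x$), $D_P$ is correspondingly order-lowering, and the commutator $H_P := [U_P, D_P]$ acts on the rank-$r$ subspace $V_r(P)$ as multiplication by $2r - n$. Together $(U_P, D_P, H_P)$ then form a representation of $\mathfrak{sl}_2$ on $V(P)$, and the Peck conclusions fall out of its decomposition into irreducibles: rank-symmetry and rank-unimodality come from the fact that weight-space dimensions of finite-dimensional $\mathfrak{sl}_2$-modules are automatically symmetric and unimodal, while the strong Sperner property comes from the injectivity of $U_P^k : V_r(P) \to V_{r+k}(P)$ for $r + k \leq n - r$, via an explicit symmetric chain decomposition.

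Granted this reformulation, the product lemma becomes a clean tensor-product construction. Given Peck posets $P$ and $Q$ of ranks $n_P$ and $n_Q$, with operators $(U_P, D_P)$ and $(U_Q, D_Q)$, I would define on $V(P \times Q) \cong V(P) \otimes V(Q)$ the operators
\begin{align*}
U := U_P \otimes I_Q + I_P \otimes U_Q, \qquad D := D_P \otimes I_Q + I_P \otimes D_Q.
\end{align*}
Since the covers of $(x, y)$ in $P \times Q$ are precisely the pairs $(x', y)$ with $x \lessdot x'$ together with the pairs $(x, y')$ with $y \lessdot y'$, the operator $U$ is order-raising on $P \times Q$, and similarly $D$ is order-lowering. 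The cross-terms $[U_P \otimes I_Q, I_P \otimes D_Q]$ and $[I_P \otimes U_Q, D_P \otimes I_Q]$ vanish because their factors act on disjoint tensor slots, so
\begin{align*}
[U, D] = [U_P, D_P] \otimes I_Q + I_P \otimes [U_Q, D_Q] = H_P \otimes I_Q + I_P \otimes H_Q,
\end{align*}
which scales the basis vector $(x, y)$ by $(2\rho_P(x) - n_P) + (2\rho_Q(y) - n_Q) = 2\rho_{P \times Q}(x, y) - (n_P + n_Q)$. This is precisely the required eigenvalue for a poset of rank $n_P + n_Q$, so $(U, D)$ witnesses the Peck property of $P \times Q$, and the ``hence strongly Sperner'' addendum is immediate from the definition of Peck.

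The main obstacle is the reformulation itself rather than the tensor-product step, which is essentially formal. Specifically, the passage from the existence of the $\mathfrak{sl}_2$-action to the strong Sperner property requires using the order-raising hypothesis on $U_P$ in an essential way to convert the algebraic injectivity of $U_P^k$ into an explicit symmetric chain decomposition of $P$; this is the combinatorics-meets-representation-theory heart of the argument. The converse direction of the equivalence (a Peck poset admits such operators) is also nontrivial but is not strictly needed here if one takes the existence of $(U_P, D_P)$ as an equivalent working definition. In our application the two base posets are factors that admit such operators by hypothesis (or by the inductive scaffolding of the preceding lemmas), and the tensor-product construction above then transfers the $\mathfrak{sl}_2$-structure — and thus the Peck property — to the product.
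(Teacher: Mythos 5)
The paper offers no proof of this lemma: it is imported verbatim from Proctor--Saks--Sturtevant \cite[Theorem 3.2]{PSS80}, so there is no internal argument to compare against. Your sketch is, in outline, exactly the proof in the cited source (in the $\mathfrak{sl}_2$ packaging later emphasized by Proctor): covers in $P\times Q$ change exactly one coordinate, so $U_P\otimes I+I\otimes U_Q$ is order-raising; the cross-commutators vanish; and the weight of $(x,y)$ is additive, so the tensor product of the two $\mathfrak{sl}_2$-structures is again one of the required form. The injectivity/surjectivity of powers of $U$ between rank spaces, converted into Hasse-diagram matchings via a nonzero-minor argument and assembled into a partition by symmetric saturated chains, then yields rank-symmetry, rank-unimodality, and the strong Sperner property.

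One caveat in your own accounting is backwards, though. You say the direction ``a Peck poset admits such operators'' is ``not strictly needed''---but the lemma quantifies over \emph{all} Peck posets, defined combinatorially, so to launch your tensor construction on arbitrary Peck factors $P$ and $Q$ you must first produce $U_P$ and $U_Q$; that forward implication is precisely the nontrivial half of the PSS characterization and cannot be replaced by ``taking the existence of the operators as the definition'' without changing the statement. In the present paper this is harmless, since the lemma is only ever applied to products of the posets $M(m_j)$, for which Stanley's and Proctor's proofs of Lemma~\ref{lem:mnpeck} construct the raising operators explicitly. A smaller point: you do not actually need $D$ to be order-lowering anywhere---only $U$'s support on cover pairs is used combinatorially, and insisting on an order-lowering $D$ makes your ``working definition'' potentially stronger than Peck.
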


	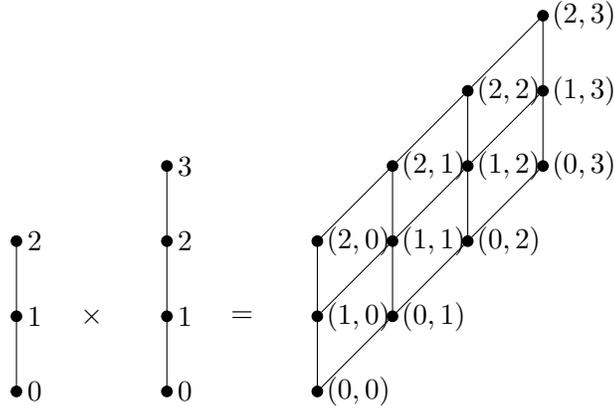
\begin{figure}
	\centering
		\begin{tikzpicture}
		    {\tikzstyle{every node}=[draw,circle,fill=black,minimum size=4pt,
		                            inner sep=0pt]
		
		    % First, draw the inner hexagon with a ``pin'' -- namely, (3214)
		    \draw (0,0) node (00) [label=right:{$(0,0)$}] {}
		        -- ++(90:1cm) node (10) [label=right:{$(1,0)$}] {}
		        -- ++(90:1cm) node (20) [label=right:{$(2,0)$}] {}
		        -- ++(45:1.414cm) node (21) [label=right:{$(2,1)$}] {}
		        -- ++(-90:1cm) node (11) [label=right:{$(1,1)$}] {}
		        -- ++(-90:1cm) node (01) [label=right:{$(0,1)$}] {}
		        -- ++(45:1.414cm) node (02) [label=right:{$(0,2)$}] {}
		        -- ++(90:1cm) node (12) [label=right:{$(1,2)$}] {}
		        -- ++(90:1cm) node (22) [label=right:{$(2,2)$}] {}
		        -- ++(45:1.414cm) node (23) [label=right:{$(2,3)$}] {}
		        -- ++(-90:1cm) node (13) [label=right:{$(1,3)$}] {}
		        -- ++(-90:1cm) node (03) [label=right:{$(0,3)$}] {};
		        
		    \draw (00) -- (01);
		    \draw (02) -- (03);
		    \draw (10) -- (11) -- (12) -- (13);
		    \draw (21) -- (22);
		    
		    \draw (-4,0) node (0) [label=right:{$0$}] {}
		    	-- ++ (90:1cm) node (1) [label=right:{$1$}] {}
		    	-- ++ (90:1cm) node (2) [label=right:{$2$}] {};
		    	
		   	\draw (-2,0) node (s0) [label=right:{$0$}] {}
		   	    	-- ++ (90:1cm) node (s1) [label=right:{$1$}] {}
		   	    	-- ++ (90:1cm) node (s2) [label=right:{$2$}] {}
		   	    	-- ++ (90:1cm) node (s3) [label=right:{$3$}] {};}
		   	    	
		   	\node at (-3,1) {$\times$};	
		   	\node at (-1,1) {$=$};
			
		\end{tikzpicture}
	\caption{Poset of $S(2,3)$}
	\end{figure}
	
	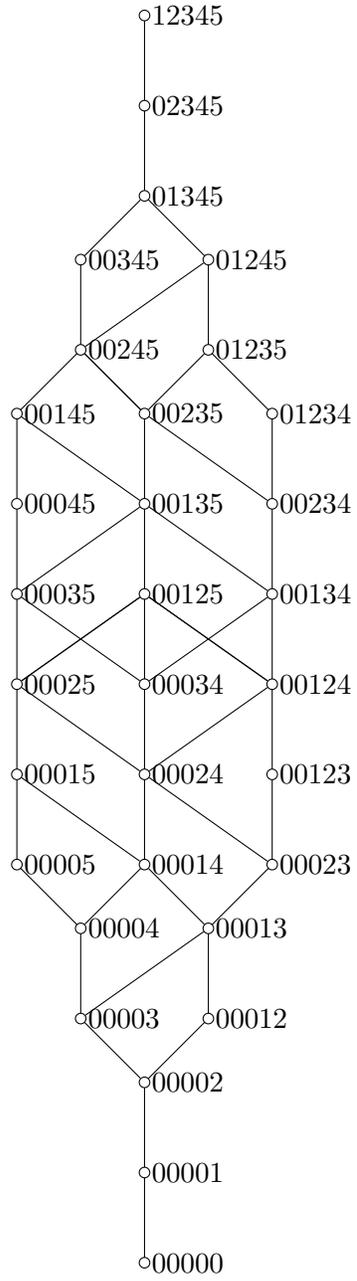
\begin{figure}
	\centering
	\begin{tikzpicture}[scale=0.8]
	    \tikzstyle{every node}=[draw,circle,fill=white,minimum size=4pt,
	                            inner sep=0pt]
	
	    % First, draw the inner hexagon with a ``pin'' -- namely, (3214)
	    \draw (0,0) node (00000) [label=right:{$00000$}] {}
	        -- ++(90:1.5cm) node (00001) [label=right:{$00001$}] {}
	        -- ++(90:1.5cm) node (00002) [label=right:{$00002$}] {}
	        -- ++(45:1.5cm) node (00012) [label=right:{$00012$}] {}
	        -- ++(90:1.5cm) node (00013) [label=right:{$00013$}] {}
	        -- ++(45:1.5cm) node (00023) [label=right:{$00023$}] {}
	        -- ++(90:1.5cm) node (00123) [label=right:{$00123$}] {}
	        -- ++(90:1.5cm) node (00124) [label=right:{$00124$}] {}
	        -- ++(90:1.5cm) node (00134) [label=right:{$00134$}] {}
	        -- ++(90:1.5cm) node (00234) [label=right:{$00234$}] {}
	        -- ++(90:1.5cm) node (01234) [label=right:{$01234$}] {}
	        -- ++(135:1.5cm) node (01235) [label=right:{$01235$}] {}
	        -- ++(90:1.5cm) node (01245) [label=right:{$01245$}] {}
	        -- ++(135:1.5cm) node (01345) [label=right:{$01345$}] {}
	        -- ++(90:1.5cm) node (02345) [label=right:{$02345$}] {}
	        -- ++(90:1.5cm) node (12345) [label=right:{$12345$}] {};
	
		\draw (00002) 
			-- ++(135:1.5cm) node (00003) [label=right:{$00003$}] {}
			-- ++(90:1.5cm) node (00004) [label=right:{$00004$}] {}
			-- ++(135:1.5cm) node (00005) [label=right:{$00005$}] {}
			-- ++(90:1.5cm) node (00015) [label=right:{$00015$}] {}
			-- ++(90:1.5cm) node (00025) [label=right:{$00025$}] {}
			-- ++(90:1.5cm) node (00035) [label=right:{$00035$}] {}
			-- ++(90:1.5cm) node (00045) [label=right:{$00045$}] {}
			-- ++(90:1.5cm) node (00145) [label=right:{$00145$}] {}
			-- ++(45:1.5cm) node (00245) [label=right:{$00245$}] {}
			-- ++(90:1.5cm) node (00345) [label=right:{$00345$}] {}
			-- (01345);
			
		\draw (00004)
			-- ++(45:1.5cm) node (00014) [label=right:{$00014$}] {}
			-- ++(90:1.5cm) node (00024) [label=right:{$00024$}] {}
			-- ++(90:1.5cm) node (00034) [label=right:{$00034$}] {}
			-- ++(90:1.5cm) node (00125) [label=right:{$00125$}] {}
			-- ++(90:1.5cm) node (00135) [label=right:{$00135$}] {}
			-- ++(90:1.5cm) node (00235) [label=right:{$00235$}] {}
			-- (00245);
			
		\draw (00003) -- (00013) -- (00014) -- (00015);
		\draw (00023) -- (00024) -- (00025) -- (00125) -- (00124);
		\draw (00024) -- (00124) -- (00125) -- (00025);
		\draw (00134) -- (00034) -- (00035) -- (00135) -- (00134);
		\draw (00135) -- (00145);
		\draw (00234) -- (00235) -- (00245) -- (01245);
		\draw (00235) -- (01235);
		
	\end{tikzpicture}
	\caption{Poset of $M(5)$}
	\end{figure}

\section{Proof of Theorem \ref{thm:main1}}\label{sec:thmproofmain1}
We establish the link between a non-negative function and a weighted chain poset. Consider a $\#$-log-concave function $f=\sum_{r=0}^{n} a_r\{x_r\}$, 
where $x_0<\cdots<x_n$ and $a_{r}^2\geq a_{r-1} a_{r+1}$. By letting $S_f:=\text{Supp}(f)=\{x_0,\cdots,x_n\}$ with a weight function $f(x_r)=a_r$, $(S_f,f,\leq)$ forms a ranked and weighted chain poset. Thus, we regard a non-negative function with finite support as a weighted chain poset:
\begin{align*}
f\equiv (S_f,f,\preccurlyeq),
\end{align*}
where the relation $\preccurlyeq$ is the same as the usual $\leq$. Since $f$ is $\#$-log-concave, weighted Whitney numbers of $S_f$ are log-concave. Similarly $f^{\#}=\sum_{r=0}^{n}a_r\{r\}$ forms a weighted chain poset $S_{f^{\#}}=\{0,\cdots,n\}$ with log-concave Whitney numbers. Based on the construction, $S_f$ is isomorphic to $S_{f^{\#}}$ by mapping $\phi(x_r)=r$, so $f(x_r)=f^{\#}(\phi(x_r))$. i.e. the isomorphism map $\phi$ can be chosen to be the rank function of $S_f$.

Next, consider $N$  non-negative functions $f_1,\cdots,f_N$, all of which are $\#$-log-concave. Define $F(x^{(1)},\cdots, x^{(N)}):=f_1 (x^{(1)}) \cdots f_N(x^{(N)})$. Similarly define $F^{\#}(x^{(1)},\cdots, x^{(N)}):=f_1^{\#} (x^{(1)}) \cdots f_N^{\#}(x^{(N)})$. As shown above, there exists an isomorphic map $\phi_i$ between $S_{f_i}$ and $S_{f_i^{\#}}$ for each $i=1,\cdots,N$. Thus, we choose $\phi:S_{f_1}\times \cdots \times S_{f_N}\to S_{f_1^{\#}}\times \cdots \times S_{f_N^{\#}}$ by
\begin{align}\label{eq:isomorphism}
\phi \left(x^{(1)}_{r_1},\cdots, x^{(N)}_{r_N} \right)=\left( \phi_1(x^{(1)}_{r_1}), \cdots, \phi_N(x^{(N)}_{r_N}) \right).
\end{align}
Then $S_{f_1}\times \cdots \times S_{f_N}$ is isomorphic to $S_{f_1^{\#}}\times \cdots \times S_{f_N^{\#}}$ by $\phi$. i.e.
\begin{align*}
\left( S_{f_1}\times \cdots \times S_{f_N} ,F ,\preccurlyeq \right) \equiv \left( S_{f_1^{\#}}\times \cdots \times S_{f_N^{\#}} , F^{\#} ,\preccurlyeq \right),
\end{align*}
where $F\left( x^{(1)}_{r_1},\cdots, x^{(N)}_{r_N} \right) = F^{\#} \left( \phi_1(x^{(1)}_{r_1}), \cdots, \phi_N(x^{(N)}_{r_N}) \right)$.
\begin{lem}\label{lem:snormal}
$S_{f_1^{\#}}\times \cdots \times S_{f_{N}^{\#}}$ forms a normal poset with log-concave weighted Whitney numbers.
\end{lem}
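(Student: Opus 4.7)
The plan is to realize each factor $S_{f_i^{\#}}$ as a weighted chain poset whose Whitney numbers are log-concave, and then invoke the Hsieh--Kleitman product theorem (Proposition \ref{lemma_hsieh}) inductively on $N$.

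First, I would unpack the chain factors. Write $f_i = \sum_{r=0}^{n_i} a^{(i)}_r \{x^{(i)}_r\}$, so that $S_{f_i^{\#}} = \{0,1,\ldots,n_i\}$ with the usual total order and weight function $w_i(r) = a^{(i)}_r$. This is a graded chain poset in which rank $r$ consists of the single element $r$, so the weighted Whitney number at rank $r$ is exactly $a^{(i)}_r$. By the $\#$-log-concavity hypothesis on $f_i$, the sequence $(a^{(i)}_0,\ldots,a^{(i)}_{n_i})$ is log-concave, so each factor $S_{f_i^{\#}}$ has log-concave weighted Whitney numbers. Moreover, as already noted in the proof of Corollary \ref{cor:stronglyspernerchain}, every weighted chain is automatically a normal poset: for any antichain $A$ contained in rank $r$, $A$ must equal $\{r\}$ (or be empty), and its upper shade $\nabla(A)$ is $\{r+1\}$ (or empty), so both sides of the normalized matching inequality \eqref{eq:weightsumratio} equal $1$.

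Second, I would promote this to the product. With each $S_{f_i^{\#}}$ now identified as a normal poset with log-concave weighted Whitney numbers, Proposition \ref{lemma_hsieh} applied once gives that $S_{f_1^{\#}} \times S_{f_2^{\#}}$ is itself normal with log-concave weighted Whitney numbers. Since the class of normal posets with log-concave weighted Whitney numbers is closed under products by this same proposition, a straightforward induction on $N$ yields the same property for
\[
S_{f_1^{\#}} \times S_{f_2^{\#}} \times \cdots \times S_{f_N^{\#}},
\]
which is exactly the claim. The weight on the product is the product weight $F^{\#}(r_1,\ldots,r_N) = \prod_i a^{(i)}_{r_i}$, matching the weight structure required by Proposition \ref{lemma_hsieh}.

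There is no real obstacle here beyond bookkeeping: the substantive content (preservation of the normalized matching property and of log-concavity of Whitney numbers under products) is entirely carried by Proposition \ref{lemma_hsieh}, and normality of a weighted chain is immediate. The only thing to double-check is that the grading on the product and its weighted Whitney numbers are the ones contemplated in Hsieh--Kleitman, namely the natural componentwise rank with weight $w_{S \times T}(s,t) = w_S(s)w_T(t)$; this is exactly the product-poset structure defined in Section \ref{section:poset} and the weights it induces on $S_{f_1^{\#}} \times \cdots \times S_{f_N^{\#}}$ agree with $F^{\#}$, so the induction goes through cleanly.
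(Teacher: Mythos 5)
Your proof is correct and follows essentially the same route as the paper: the paper also observes that each $S_{f_i^{\#}}$ is a weighted chain (hence trivially normal, with log-concave weighted Whitney numbers coming from $\#$-log-concavity of $f_i$) and then closes under products via the Hsieh--Kleitman result, packaged there as Corollary~\ref{cor:stronglyspernerchain}. Your version just makes the induction on $N$ and the identification of the product weight with $F^{\#}$ explicit.
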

\begin{proof}
Each $S_{f_i}^{\#}$ is a chain, thus it is a normal poset with log-concave weights. Corollary \ref{cor:stronglyspernerchain} confirms that the product of normal posets is again normal with log-concave weighted Whitney numbers.
\end{proof}

Next, we establish a link between a product of posets and a convolution of non-negative functions through an antichain. We define a level set
\begin{align*}
L[x] := \left\{ \left( x^{(1)},\cdots,x^{(N)} \right) : x^{(1)}+\cdots + x^{(N)}=x, x^{(i)}\in S_{f_i} \text{ for $i=1,\cdots,N$} \right\}.
\end{align*}
\begin{lem}\label{lem:antichain}
$\phi\left( L[x] \right)$ forms an antichain in $S_{f_1^{\#}}\times \cdots \times S_{f_N^{\#}}$.
\end{lem}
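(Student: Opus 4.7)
The plan is to show the contrapositive: any two comparable elements in $\phi(L[x])$ must in fact coincide. The argument hinges on the key fact that, for each $i$, the support $S_{f_i} = \{x_0^{(i)} < x_1^{(i)} < \cdots\}$ is enumerated in \emph{strictly} increasing order, and that the isomorphism $\phi_i$ between $S_{f_i}$ and $S_{f_i^{\#}}$ is the rank map $\phi_i(x_r^{(i)}) = r$, so it preserves the strict order as well.

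First, I would pick two arbitrary elements in $\phi(L[x])$, say $(r_1,\ldots,r_N) = \phi(x_{r_1}^{(1)},\ldots,x_{r_N}^{(N)})$ and $(s_1,\ldots,s_N) = \phi(x_{s_1}^{(1)},\ldots,x_{s_N}^{(N)})$, both coming from preimages in $L[x]$. Next I would assume they are comparable in the product poset $S_{f_1^{\#}}\times\cdots\times S_{f_N^{\#}}$; without loss of generality, $r_i \leq s_i$ for all $i \in \{1,\ldots,N\}$.

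Applying the inverse isomorphism $\phi_i^{-1}$ componentwise, the condition $r_i \leq s_i$ translates into $x_{r_i}^{(i)} \leq x_{s_i}^{(i)}$ for every $i$, with strict inequality whenever $r_i < s_i$ (because the enumeration of $S_{f_i}$ is strictly increasing). Summing these $N$ inequalities yields
\begin{equation*}
\sum_{i=1}^{N} x_{r_i}^{(i)} \leq \sum_{i=1}^{N} x_{s_i}^{(i)},
\end{equation*}
with equality if and only if $r_i = s_i$ for every $i$. But since both tuples lie in $L[x]$, both sums equal $x$, forcing equality throughout and hence $(r_1,\ldots,r_N) = (s_1,\ldots,s_N)$.

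I do not expect any serious obstacle here; the result follows almost immediately from the fact that $\phi$ is a rank-preserving isomorphism of chain posets combined with the definition of $L[x]$ as a level set of the sum functional. The only subtlety worth flagging is to be explicit that the enumeration $x_0^{(i)} < \cdots < x_{n_i}^{(i)}$ is strict (as stipulated in Definition~\ref{defn:hash}), since without strictness one could have $r_i < s_i$ yet $x_{r_i}^{(i)} = x_{s_i}^{(i)}$, which would break the argument.
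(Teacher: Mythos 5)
Your proof is correct and follows essentially the same route as the paper: both use the fact that $\phi$ is an order isomorphism to reduce the question to $L[x]$ itself, and then observe that two comparable, distinct tuples would have strictly different coordinate sums, contradicting membership in the same level set. Your explicit remark about the strictness of the enumeration $x_0 < \cdots < x_n$ is a nice touch but does not change the substance of the argument.
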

\begin{proof}
Note that $\phi$ in \eqref{eq:isomorphism} is a bijective and order-preserving map. Thus, it suffices to consider elements in $L[x]$. Suppose that there exist two distinct comparable elements $\mathbf{x}:=\left( x^{(1)},\cdots,x^{(N)} \right)$ and $\mathbf{y}:= \left( y^{(1)},\cdots,y^{(N)} \right)$ such that $\mathbf{x},\mathbf{y}\in L[x]$ and $\textbf{x} \preccurlyeq \mathbf{y}$. This implies $x^{(i)}\leq y^{(i)}$ for each $i=1,\cdots,N$. Since $\mathbf{x}$ and $\mathbf{y}$ are distinct, there exists some $j$ such that $x^{(j)} < y^{(j)}$. Hence
\begin{align*}
x^{(1)} + \cdots + x^{(N)} < y^{(1)} + \cdots + y^{(N)}.
\end{align*}
This contradicts the fact that both $\mathbf{x}$ and $\mathbf{y}$ are in $L[x]$.
\end{proof}
Since $S_{f_1^{\#}}\times \cdots \times S_{f_N^{\#}}$ is strongly Sperner, majorization follows.
\begin{prop}\label{prop:hashlogconcave}
If $f_1,\cdots,f_N$ are $\#$-log-concave probability mass functions,
\begin{align*}
f_1\star \cdots \star f_N \prec f_1^{\#} \star \cdots \star f_{N}^{\#}.
\end{align*}
\end{prop}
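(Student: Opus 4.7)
The plan is to translate the Sperner-theoretic scaffolding built in Lemmas~\ref{lem:snormal} and~\ref{lem:antichain} directly into the two defining conditions of majorization. The crucial observations are: (a) the value $(f_1\star\cdots\star f_N)(x)$ is exactly the $F$-weight of the level-set antichain $L[x]$ in $S_{f_1}\times\cdots\times S_{f_N}$; and (b) via the weight-preserving, order-preserving isomorphism $\phi$ of~\eqref{eq:isomorphism}, this quantity equals the $F^\#$-weight of the antichain $\phi(L[x])$ sitting inside the $\#$-product poset $S_{f_1^\#}\times\cdots\times S_{f_N^\#}$.

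First I would identify the weighted Whitney numbers of the $\#$-product poset with values of the target convolution. Each chain $S_{f_i^\#}=\{0,1,\ldots,n_i\}$ has the identity as its rank function, so the rank of $(r_1,\ldots,r_N)$ in the product equals $r_1+\cdots+r_N$. Summing the product weight over a fixed rank-$r$ level therefore gives
\[
\sum_{r_1+\cdots+r_N=r} f_1^\#(r_1)\cdots f_N^\#(r_N)=(f_1^\#\star\cdots\star f_N^\#)(r),
\]
so the Whitney number sequence of $S_{f_1^\#}\times\cdots\times S_{f_N^\#}$ is literally the convolution $f_1^\#\star\cdots\star f_N^\#$.

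Next I would establish the partial-sum inequality~\eqref{eqn:majorization_condition_1}. Fix $k\geq 1$ and choose distinct integers $x_1^*,\ldots,x_k^*$ at which $f_1\star\cdots\star f_N$ attains its $k$ largest values. By Lemma~\ref{lem:antichain} each $\phi(L[x_i^*])$ is an antichain in the $\#$-product poset, and distinct level sets are disjoint because their elements have distinct coordinate sums; hence $\bigcup_{i=1}^k \phi(L[x_i^*])$ is a union of $k$ antichains, i.e.\ a $k$-family, whose total $F^\#$-weight equals $\sum_{i=1}^k (f_1\star\cdots\star f_N)^{[i]}$. Lemma~\ref{lem:snormal} combined with Proposition~\ref{proposition_normal_strongly_spermer} tells us that the $\#$-product poset is strongly Sperner, so the weight of this $k$-family is bounded above by the sum of its $k$ largest weighted Whitney numbers, which by the previous paragraph is precisely $\sum_{i=1}^k (f_1^\#\star\cdots\star f_N^\#)^{[i]}$. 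Condition~\eqref{eqn:majorization_condition_2} is then immediate: both convolutions are probability mass functions and therefore have total mass $1$, any nominal discrepancy in support size being absorbed by padding with zeros.

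The step that demands the most care is the bookkeeping between the two product posets: confirming that $\phi$ is simultaneously a poset isomorphism and weight-preserving, that distinct level sets $L[x]$ map to disjoint antichains so their union is genuinely a $k$-family (rather than a multiset), and that the Whitney numbers of the $\#$-product poset realize $f_1^\#\star\cdots\star f_N^\#$ rank-by-rank. Once these identifications are pinned down, a single invocation of strong Sperner converts them into the majorization inequality in one stroke.
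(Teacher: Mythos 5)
Your proposal is correct and follows essentially the same route as the paper's proof: level sets $L[x]$ are antichains, $\phi$ transports them into the normal (hence strongly Sperner) product poset $S_{f_1^{\#}}\times\cdots\times S_{f_N^{\#}}$, whose weighted Whitney numbers are exactly the values of $f_1^{\#}\star\cdots\star f_N^{\#}$, and the $k$-family bound gives condition~\eqref{eqn:majorization_condition_1} while equal total mass gives~\eqref{eqn:majorization_condition_2}. If anything, you spell out two points the paper leaves implicit (the rank-by-rank identification of Whitney numbers with the target convolution, and the disjointness of distinct level sets needed to form a genuine $k$-family), which is a welcome addition rather than a deviation.
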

\begin{proof}
Let $f_L:=f_1\star \cdots \star f_N$ and $f^{\#}_R := f_1^{\#} \star \cdots \star f_{N}^{\#}$. Convolutions in $f_L$ can be fully represented by
\begin{align*}
f_L(x) = \sum_{(x^{(1)},\cdots, x^{(N)})\in L[x]} f_1(x^{(1)})\cdots f_N(x^{(N)}).
\end{align*}
The isomorphism $\rho$ in \eqref{eq:isomorphism} and Lemma \ref{lem:antichain} imply that $f_L(x)$ can be regarded as a sum of weights of an antichain in $ S_{f_1^{\#}}\times \cdots \times S_{f_N^{\#}}$. Since $ S_{f_1^{\#}}\times \cdots \times S_{f_N^{\#}}$ is a normal poset by Lemma \ref{lem:snormal}, $S_{f_1^{\#}}\times \cdots \times S_{f_N^{\#}}$ is strongly Sperner. Therefore
\begin{align*}
\sum_{i=1}^{k} f_L^{[i]} \leq \sum_{i=1}^{k} f_R^{[i]},
\end{align*}
where the left-hand side corresponds to the sum of $k$ antichains and the right-hand side corresponds to the sum of $k$ largest Whitney numbers in  $S_{f_1^{\#}}\times \cdots \times S_{f_N^{\#}}$. Since $f_L$ and $f_R$ are still probability mass functions,
\begin{align*}
\sum_{i=1}^{M} f_L^{[i]} = \sum_{i=1}^{M} f_R^{[i]}=1,
\end{align*}
for some sufficiently large $M>0$. Thus majorization follows.
\end{proof}
We finally note that Theorem \ref{thm:main1} is a restatement of Proposition \ref{prop:hashlogconcave} using the notions of random variables.

\section{Proof of Theorem \ref{thm:main2}}\label{sec:thmproofmain2}

We assume that $0<a_1<\cdots<a_N$. Let $X_{i,j}$ for $1\leq i \leq N$ and $1\leq j \leq m_i\leq N$ be independent random variables following $\text{Bernoulli}\left(\frac{1}{2}\right)$. From the assumption, $1\leq m_N\leq m_{N-1}\leq \cdots \leq m_1$. Then, we can decompose each $Y_i$ as follows:
\begin{align*}
Y_1 &:= X_{1,1} + \cdots + X_{1,m_1},\\
\vdots&\qquad\qquad\quad\vdots\\
Y_N &:= X_{N,1} + \cdots + X_{N,m_N}.
\end{align*}
By the construction, $Y_i$'s are independent random variables following $\text{Binomial}\left(m_i,\frac{1}{2} \right)$ for all $1\leq m_N\leq m_{N-1}\leq \cdots \leq m_1$. We denote $n_j$ by the number of defined $X_{i,j}$'s for each $1\leq j\leq N$. Let $\mathbf{Z_j} := \left( X_{1,j}, \cdots, X_{m_j,j}\right)$.

For each $j$, consider an element $\mathbf{i_j}=(b_1,\cdots,b_{m_j})$ in $M(m_j)$. We encode $b_k=i>0$ for some $k$ in $\mathbf{i_j}$ iff $X_{i,j}=1$. Otherwise, $b_k=0$. For example, when $m_j=5$,
\begin{align*}
\mathbf{i_j}=(0,0,0,2,4) \quad\text{iff} \quad \mathbf{Z_j} = \left( X_{1,j}=0, X_{2,j}=1,X_{3,j}=0,X_{4,j}=1,X_{5,j}=0\right).
\end{align*}
Thus as described above, there exists a bijective link between an element $\mathbf{i_j}$ in $M(m_j)$ and each realization of the random vector $\mathbf{Z_j}$. Furthermore, we are able to construct another bijective map between an element $(\mathbf{i_1},\cdots,\mathbf{i_N})$ in $M(m_1)\times \cdots \times M(m_N)$ and each realization of the random array $(\mathbf{Z_1},\cdots, \mathbf{Z_N})$.

Let $L_j(\mathbf{Z_j}) := a_1X_{1,j} + \cdots + a_{j} X_{m_j,j}$ for each $1\leq j\leq N$. Based on the construction, we see that
\begin{align*}
Y_L:&= a_1Y_1 + \cdots + a_N Y_N = L_1(\mathbf{Z_1}) + \cdots + L_N(\mathbf{Z_N}).
\end{align*}
As demonstrated in Section \ref{sec:thmproofmain1}, we similarly define a level set $L_j[x_j]$ in $M(m_j)$ as follows:
\begin{align*}
L_j[x_j] := \left\{ \mathbf{i_j}\in M(m_j) : \mathbf{i_j} \text{ bijectively corresponds to } \mathbf{Z_j} \text{ such that } L_j(\mathbf{Z_j})=x_j \right\}.
\end{align*}
\begin{lem}\label{lem:mnantichain}
$L_j[x_j]$ forms an antichain in $M(m_j)$.
\end{lem}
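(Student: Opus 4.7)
The plan is to mimic the proof of Lemma~\ref{lem:antichain} in the preceding section, with one extra observation that uses the strict monotonicity of the weights $a_1 < a_2 < \cdots < a_N$.

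First I would reinterpret the linear functional $L_j$ purely in terms of the $M(m_j)$-coordinates. By the encoding, if $\mathbf{i_j}=(b_1,\ldots,b_{m_j})\in M(m_j)$ corresponds to the Bernoulli vector $\mathbf{Z_j}$, then the positive entries $b_k$ are exactly the indices $i$ with $X_{i,j}=1$, each listed once. Adopting the convention $a_0 := 0$, this gives the clean identity
\begin{align*}
L_j(\mathbf{Z_j}) \;=\; \sum_{i=1}^{m_j} a_i X_{i,j} \;=\; \sum_{k=1}^{m_j} a_{b_k}.
\end{align*}
In particular, $\mathbf{i_j}\in L_j[x_j]$ iff $\sum_{k=1}^{m_j} a_{b_k}=x_j$.

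Next I would suppose, for contradiction, that $L_j[x_j]$ contains two distinct comparable elements $\mathbf{i_j}=(b_1,\ldots,b_{m_j})$ and $\mathbf{i_j'}=(b_1',\ldots,b_{m_j}')$ with $\mathbf{i_j}\preccurlyeq \mathbf{i_j'}$. By the definition of the order on $M(m_j)$, this means $b_k \leq b_k'$ for every $k$, with strict inequality for at least one index $k_0$. The hypothesis $0<a_1<a_2<\cdots<a_N$, combined with $a_0=0$, gives a strictly increasing sequence $a_0<a_1<\cdots<a_{m_j}$, so $b_k\leq b_k'$ implies $a_{b_k}\leq a_{b_k'}$ and $b_{k_0}<b_{k_0}'$ implies $a_{b_{k_0}}<a_{b_{k_0}'}$. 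Summing over $k$ yields
\begin{align*}
x_j \;=\; \sum_{k=1}^{m_j} a_{b_k} \;<\; \sum_{k=1}^{m_j} a_{b_k'} \;=\; x_j,
\end{align*}
a contradiction. Hence any two elements of $L_j[x_j]$ that are comparable must be equal, i.e. $L_j[x_j]$ is an antichain.

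I do not anticipate any real obstacle; the statement is essentially a transcription of Lemma~\ref{lem:antichain} for the poset $M(m_j)$, with the single additional ingredient being the strict positivity and strict monotonicity of the weights $a_i$, which prevents cancellation and forces the equality case. The only mild care needed is to remember the convention $a_0=0$ so that the formula $L_j(\mathbf{Z_j})=\sum_k a_{b_k}$ is valid even for coordinates $b_k=0$ (which contribute nothing), and to use that the sequence $a_0<a_1<\cdots$ is strictly increasing so that any strict inequality at a single coordinate propagates to a strict inequality of the sums.
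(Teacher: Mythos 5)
Your proof is correct and follows essentially the same route as the paper's: assume two distinct comparable elements of $L_j[x_j]$, locate a coordinate where the inequality is strict, and use the monotonicity of the weights to conclude that the two values of $L_j$ differ, contradicting membership in the same level set. If anything, your write-up is slightly more careful than the paper's, which asserts $0<a_{b_k}<a_{b'_k}$ without the convention $a_0=0$ and leaves implicit that the remaining coordinates satisfy $a_{b_l}\leq a_{b'_l}$.
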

\begin{proof}
Suppose that there exist two distinct elements $\mathbf{i_j}$ and $\mathbf{i'_j}$ in $L_j[x_j]$ such that $\mathbf{i_j} \preccurlyeq \mathbf{i'_j}$. Assume that  $\mathbf{i_j}$ and $\mathbf{i'_j}$ correspond to $\mathbf{Z_j}$ and $\mathbf{Z'_j}$, respectively. Since $\mathbf{i_j}=(b_1,\cdots,b_{m_j})$ and $\mathbf{i'_j}=(b'_1,\cdots,b'_{m_j})$ are distinct, there exists some $k>0$ such that $b_k < b'_k$. Since $a_i>0$, $0<a_{b_k}< a_{b'_k}$. It implies that
\begin{align*}
L_j(\mathbf{Z_j}) < L_j(\mathbf{Z'_j}).
\end{align*}
This contradicts that both $\mathbf{i_j}$ and $\mathbf{i'_j}$ are in $L_j[x_j]$.
\end{proof}
More generally, we define a level set $L[x]$ in $M(m_1)\times \cdots \times M(m_N)$ as
\begin{align*}
L[x] &:= \{ (\mathbf{i_1},\cdots,\mathbf{i_N})\in M(m_1)\times \cdots \times M(m_N) : (\mathbf{i_1},\cdots,\mathbf{i_N}) \\
& \text{ bijectively corresponds to } (\mathbf{Z_1},\cdots,\mathbf{Z_N}) \text{ such that } L_1(\mathbf{Z_1}) + \cdots + L_N(\mathbf{Z_N})=x \}.
\end{align*}
Following the same argument in Lemma \ref{lem:mnantichain}, we see that $L[x]$ forms an antichain. We omit the proof for simplicity.
\begin{lem}
$L[x]$ forms an antichain in $M(m_1)\times \cdots \times M(m_N)$.
\end{lem}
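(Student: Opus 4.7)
The plan is to extend the componentwise argument used for $L_j[x_j]$ in Lemma~\ref{lem:mnantichain} from a single factor to the product poset. Specifically, I will show that if $(\mathbf{i_1},\ldots,\mathbf{i_N})$ and $(\mathbf{i'_1},\ldots,\mathbf{i'_N})$ are two distinct elements of $L[x]$ that are comparable in $M(m_1)\times\cdots\times M(m_N)$, then the two corresponding values of $L_1(\mathbf{Z_1})+\cdots+L_N(\mathbf{Z_N})$ cannot both equal $x$.

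First, I would record a monotonicity observation at the single-factor level: if $\mathbf{i_j}\preccurlyeq\mathbf{i'_j}$ in $M(m_j)$ and these correspond (via the bijection established just before Lemma~\ref{lem:mnantichain}) to $\mathbf{Z_j}$ and $\mathbf{Z'_j}$, then $L_j(\mathbf{Z_j})\leq L_j(\mathbf{Z'_j})$. This follows coordinatewise, since writing $\mathbf{i_j}=(b_1,\ldots,b_{m_j})$ and $\mathbf{i'_j}=(b'_1,\ldots,b'_{m_j})$ with $b_k\leq b'_k$ for every $k$, the positivity and monotone ordering of the weights (with the convention $a_0:=0$) yield $a_{b_k}\leq a_{b'_k}$, and summing over $k$ gives the claim. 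Moreover, if additionally $\mathbf{i_j}\neq\mathbf{i'_j}$, then some coordinate strictly increases and the inequality becomes strict $L_j(\mathbf{Z_j})<L_j(\mathbf{Z'_j})$, exactly as in the proof of Lemma~\ref{lem:mnantichain}.

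Second, I apply this in each factor. Suppose, for contradiction, that $(\mathbf{i_1},\ldots,\mathbf{i_N})\preccurlyeq(\mathbf{i'_1},\ldots,\mathbf{i'_N})$ in the product poset and both lie in $L[x]$, with the two tuples distinct. By the definition of the product partial order, $\mathbf{i_j}\preccurlyeq\mathbf{i'_j}$ for every $j$, so the monotonicity above gives $L_j(\mathbf{Z_j})\leq L_j(\mathbf{Z'_j})$ for every $j$. Since the two product tuples are distinct, there exists some index $j_0$ with $\mathbf{i_{j_0}}\neq\mathbf{i'_{j_0}}$, and for that index the strict version of monotonicity applies. Summing the inequalities over $j$ (strict at $j_0$) yields
\begin{equation*}
L_1(\mathbf{Z_1})+\cdots+L_N(\mathbf{Z_N}) \;<\; L_1(\mathbf{Z'_1})+\cdots+L_N(\mathbf{Z'_N}),
\end{equation*}
contradicting the fact that both sides equal $x$.

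There is really no obstacle here; the argument is a direct repetition of the reasoning in Lemma~\ref{lem:mnantichain} carried out at the level of the product poset, and the only mild bookkeeping point is to be explicit about the convention $a_0=0$ (so that padding zero entries in $\mathbf{i_j}$ produces no contribution to $L_j(\mathbf{Z_j})$) and about the fact that the distinct-tuples hypothesis in the product forces at least one factor to be strictly increased. This is precisely why the authors indicated that the proof could be omitted.
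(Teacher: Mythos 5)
Your proof is correct and is exactly the argument the paper intends: the paper omits the proof of this lemma, stating only that it follows the same reasoning as Lemma~\ref{lem:mnantichain}, and your write-up supplies precisely that reasoning (componentwise monotonicity of each $L_j$ under the order on $M(m_j)$, with the convention $a_0=0$, strict in at least one factor when the tuples are distinct, summed over $j$). There are no discrepancies with the paper's approach; if anything, your explicit handling of the $a_0=0$ convention is slightly more careful than the wording in the paper's proof of Lemma~\ref{lem:mnantichain}.
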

Next, Lemma \ref{lem:productpeck} implies that a product of posets $M(m_1)\times \cdots \times M(m_N)$ is again Peck, so it is strongly Sperner.
\begin{lem}\label{lem:productmnpeck}
$M(m_1)\times \cdots \times M(m_N)$ is Peck, thus strongly Sperner.
\end{lem}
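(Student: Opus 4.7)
The statement is essentially an immediate corollary of two results already recorded in the excerpt, so my proposal is to chain them together cleanly rather than redo any Sperner-theoretic work from scratch.

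The plan is to argue coordinate by coordinate. First I would invoke Lemma \ref{lem:mnpeck} of Stanley--Proctor to note that for every fixed $m_i\in\mathbb{Z}_{>0}$, the poset $M(m_i)$ is Peck, that is, rank-symmetric, rank-unimodal, and strongly Sperner. This gives a family of $N$ Peck posets $M(m_1),\ldots, M(m_N)$, with the product order defined as in Section~\ref{section:poset}: $(\mathbf{i}_1,\ldots,\mathbf{i}_N)\preccurlyeq (\mathbf{i}_1',\ldots,\mathbf{i}_N')$ iff $\mathbf{i}_j\preccurlyeq \mathbf{i}_j'$ in $M(m_j)$ for each $j$, and with rank function $\rho(\mathbf{i}_1,\ldots,\mathbf{i}_N)=\sum_{j=1}^N \rho_j(\mathbf{i}_j)$.

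Then I would apply Lemma \ref{lem:productpeck} (the Proctor--Saks--Sturtevant result) inductively: the product of two Peck posets is Peck, so by induction on $N$ the $N$-fold product $M(m_1)\times\cdots\times M(m_N)$ is Peck. Finally, since every Peck poset is by definition strongly Sperner, the second clause follows automatically, which completes the proof.

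I do not anticipate any real obstacle here; the statement is, by design, a packaging lemma whose whole point is to apply the heavy-machinery results cited in the previous subsection (Stanley \cite{Sta80}, Proctor \cite{Pro82}, Proctor--Saks--Sturtevant \cite{PSS80}) to the specific posets $M(m_i)$ that will be used in Section~\ref{sec:thmproofmain2}. The only thing worth double-checking in writing up the proof is that the rank, order, and weight conventions for the product of $M(m_i)$'s used implicitly elsewhere in the paper agree with the conventions in the source for Lemma \ref{lem:productpeck}, which they do (unweighted, componentwise order, additive rank).
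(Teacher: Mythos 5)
Your proposal is correct and matches the paper exactly: the paper likewise obtains this lemma by combining Lemma~\ref{lem:mnpeck} (each $M(m_i)$ is Peck, via Stanley/Proctor) with Lemma~\ref{lem:productpeck} (products of Peck posets are Peck, via Proctor--Saks--Sturtevant), and then notes that Peck implies strongly Sperner. The only difference is cosmetic: you spell out the induction on $N$ and the rank/order conventions, which the paper leaves implicit.
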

We note that $|M(m_j)|=2^{m_j}$, so $|M(m_1)\times \cdots \times M(m_N)|=2^{m_1+\cdots+m_N}$. Then,
\begin{align*}
\mathbf{P} \left( Y_L =x \right) = \frac{\left| L[x] \right|}{2^{m_1+\cdots+m_N}}.
\end{align*}
Before finding the link between strong Sperner property and majorization, it is necessary to identify the saturated case meaning that it achieves the maximal sum. When $a_1=1,\cdots,a_N=N$, let
\begin{align*}
Y_R:&= Y_1 + \cdots + N Y_N = R_1(\mathbf{Z_1}) + \cdots + R_N(\mathbf{Z_N}),
\end{align*}
where $R_j(\mathbf{Z_j}) := X_{1,j} + \cdots + m_j X_{m_j,j}$ for each $1\leq j\leq N$. Then as Stanley and Proctor explained in \cite{Sta80, Pro82}, we see that the size of each level set of $R_j(\mathbf{Z_j})$ has a bijective correspondence to a Whitney number of $M(m_j)$ by matching the rank to the value $R_j(\mathbf{Z_j})$. Hence each level set of $Y_R$ has a bijective correspondence to a Whitney number of $M(m_1)\times \cdots \times M(m_N)$ by applying the property of the product of posets. Therefore we confirm that $Y_R$ is the saturated case.

Now it remains to establish majorization through the strong Sperner property of $M(m_1)\times \cdots \times M(m_N)$. Let $\mathbb{Z}_+^0$ be a collection of non-negative integers. Observe that
\begin{align}\label{eq:majorization_mn}
\left( 2^{m_1+\cdots+m_N} \right)\sup_{C\subset \mathbb{Z}_+^0 } \sum_{|C|=k} \mathbf{P}\left( Y_L \in C \right) \leq \left( 2^{m_1+\cdots+m_N} \right) \sup_{C\subset \mathbb{Z}_+^0} \sum_{|C|=k} \mathbf{P}\left( Y_R \in C \right),
\end{align}
where the left-hand side corresponds to the sum of weights from $k$ antichains in $M(n_1)\times \cdots \times M(n_N)$ and the right-hand side exactly corresponds to the sum of $k$-largest Whitney numbers from $M(m_1)\times \cdots \times M(m_N)$. We see that the equation \eqref{eq:majorization_mn} is confirming the condition \eqref{eqn:majorization_condition_1}. The condition \eqref{eqn:majorization_condition_2} follows using the fact that the total sum equals $1$ as probability mass functions. Thus, the conclusion follows by re-stating it using the notions of random variables.

%Finally, we discuss the general case $0< a_1 \neq \cdots \neq a_N$. In this case, we can define a permutation $\sigma$ such that they can be re-ordered as $0<a_{\sigma(1)}<\cdots<a_{\sigma(N)}$. By matching indices $\{\sigma(1),\cdots,\sigma(N)\}$ to $\{1,\cdots,N\}$ in $X_{i,j}$, we can apply the same arguments as above. Even though we reorder indices, the saturated case still remains the same. Therefore the proof follows for the general case as well.

%\section{Conclusion}
%In this paper, we prove that the strong Sperner poset can be translated to majorization in the sense of function ordering. Then we prove R\'enyi entropy inequalities including discrete entropy power inequalities over the integers and the integer lattice. Furthermore, our results extend the classical result of Erd\H{o}s-Moser problem and Hardy-Littlewood-P\'olya's rearrangement inequality.

\section*{Acknowledgments}
The authors are indebted to Manjunath Krishnapur for very useful discussions, and to two anonymous reviewers for detailed
and constructive comments that greatly improved the exposition.
This work was supported in part by the U.S. National Science Foundation through grants DMS-1409504 (CAREER) and CCF-1346564.

%\section*{References}
\bibliographystyle{plain}
%% `Elsevier LaTeX' style
%\bibliographystyle{elsarticle-num}
%\bibliography{$HOME/Dropbox/MAIN/WRITINGS/CommonResources/pustak}

\end{document}